\numberwithin{equation}{section}
\newtheorem{theorem}{Theorem}[section]
\newtheorem{lemma}[theorem]{Lemma}
\newtheorem{prop}[theorem]{Proposition}
\theoremstyle{definition}
\newtheorem{remark}[theorem]{Remark}
\newtheorem*{ackno}{Acknowledgements}
\newcommand{\R}{\mathbb R}
\def\({\left(}
\def\){\right)}
\def\<{\left\langle}
\def\>{\right\rangle}
\DeclareMathOperator*{\loc}{loc}
\DeclareMathOperator*{\GN}{GN}
\DeclareMathOperator*{\rea}{Re}
\DeclareMathOperator*{\di}{div}
\title[LWP and blow-up NLS point interaction]
{Local well-posedness and blow-up  in the energy space for the 2D NLS with point interaction}
\author[L. Forcella, V. Georgiev
]{Luigi Forcella and Vladimir Georgiev}
\address[L. Forcella]{Dipartimento di Matematica, Universit\`a Degli Studi di Pisa, Largo Bruno Pontecorvo, 5, 56127, Pisa, Italy}
\email{luigi.forcella@unipi.it}
\address[V. Georgiev]{Dipartimento di Matematica, Universit\`a Degli Studi di Pisa, Largo Bruno Pontecorvo, 5, 56127, Pisa, Italy, and Faculty of Science and Engineering, Waseda University, 3-4-1, Okubo, Shinjuku-ku, Tokyo 169-8555, Japan, and Institute of Mathematics and Informatics,  Bulgarian Academy of Sciences,  Acad. Georgi Bonchev Str., Block 8, 1113 Sofia, Bulgaria}
\email{vladimir.simeonov.gueorguiev@unipi.it}
\subjclass[2020]{35Q55; 35J10; 35A21}
\keywords{Nonlinear Schr\"odinger equation, Point interactions, LWP theory, Blow-up}
\begin{document}
	
\begin{abstract}
We consider the two-dimensional nonlinear Schr\"odinger equation with point interaction and we establish a local well-posedness theory, including blow-up alternative and continuous dependence on the initial data in the energy space. We provide a proof by employing a Kato's method along with Hardy inequalities with logarithmic correction. Moreover, we establish finite time blow-up for solutions with positive energy and infinite variance.
\end{abstract}

\maketitle
	
\section{Introduction}

In this paper, we consider the Cauchy problem
\begin{equation} \label{NLS}
		\begin{cases}
			i\partial_t u = \Delta_{\alpha}u\pm |u|^{p-1}u,\\ 
			u(0,x)= u_0(x),
		\end{cases}
\end{equation}
where $u=u(t,x)$, $u:\mathbb{R}\times \mathbb{R}^2\mapsto \mathbb{C}$, $p>1$ and the operators $\Delta_\alpha$ are a family of self-adjoint extensions of the free Laplace operator $-\Delta$ on the domain $C^{\infty}(\mathbb R^2\setminus\{0\})$.  The parameter  $\alpha$ varies in $\mathbb R\cup \{+\infty\}$, and characterizes  all nontrivial self-adjoint extensions on $L^2(\mathbb R^2)$ of $-\Delta\lvert_{C^{\infty}(\mathbb R^2\setminus\{0\})}$.  When $\alpha=+\infty$ we mean $\Delta_{\infty}=-\Delta$.  It can be thought  like a perturbation of the free Laplacian with a point defect (or point interaction). It is also sometimes incorrectly referred to as the Schr\"odinger operator with delta interaction, as it can be viewed as a two-dimensional extension  of the one-dimensional  operator $-\frac{d^2}{dx^2}+\alpha\delta$, where $\delta$ represents the Dirac delta distribution. Generally, the operators $\Delta_\alpha$ are well defined only in $\mathbb R^d$ with $d=1,2,3$,  and we refer to the monograph \cite{AGKH05} for a comprehensive source on that topic. In this paper, we focus on the two-dimensional case for reasons that will become clear later (see Remark \ref{rem-char} and the proof of Theorem \ref{thm-lwp}  in Section \ref{sec:LWP}). For a rigorous construction of the two-dimensional case refer to \cite{AGKH87}. Additionally, the exhaustive introductions  in \cite{CFN2021, FGI22} provide further insights.\vspace{3mm}

On the physical side, point interactions can represent idealized models of particles interacting through very short-range forces.  This is particularly useful in studying systems like quantum dots or impurities in a crystal lattice. One can see in \cite{BZ11} the link between contact-type regimes and two-impurity Kondo system, for example.  We refer to  \cite[Chapter I.5]{AGKH05} or \cite{AH81} for the mathematical framework and physical implications of point interactions in two dimensions. \vspace{3mm}

As stated before, the point-like perturbation of the standard Laplace operator can be modelled by a  one-parameter family  $\Delta_\alpha$, $\alpha\in(-\infty,\infty]$ of self-adjoint extensions of the free Laplace operator $-\Delta$ on the domain $C^{\infty}(\mathbb R^2\setminus\{0\})$. The spectrum of this operator has an absolutely continuous part $[0,\infty)$ and only one real negative eigenvalue
$e_\alpha = -4e^{-2(2\pi\alpha + \gamma)}.$
The natural energy space $H^1_\alpha$ associated with  \eqref{NLS} is the domain of $(\lambda+\Delta_\alpha)^{1/2},$ where $\lambda > |e_{\alpha}|.$  A typical element in the space  $u\in H^1_\alpha$ has the structure $u=\phi^\lambda+q\mathbb G^\lambda$, where $\phi^\lambda\in H^1(\mathbb{R}^2)$, $q\in\mathbb C$, and $\mathbb G^\lambda\notin H^1(\mathbb R^2)$. See Section \ref{Sec-tool} for rigorous definitions.
 The Cauchy datum $u_0$ in \eqref{NLS} is taken this space. Note by the form of a element $u\in H^1_\alpha(\mathbb R^2)$, this space strictly contains the usual Sobolev space $H^1(\mathbb R^2)$ arising from the unperturbed Laplacian.
Our goal is to establish a local existence  theory for \eqref{NLS} with $u_0\in H^1_{\alpha}(\mathbb R^2)$, and our first main result is the following.

\begin{theorem}\label{thm-lwp}
Let $\alpha\in \mathbb R$ and $p>1$. The Cauchy problem \eqref{NLS} is locally well posed in the classical sense in the energy space. More precisely:\vspace{2mm}

\noindent\textit{(i)} for any $u_0\in H^1_{\alpha}(\mathbb{R}^2)$ there exists a unique solution $u(t)\in C((-T_{\min}, T_{\max}))$ to \eqref{NLS} where   $(-T_{\min}, T_{\max})$ is the maximal time interval of existence, with $T_{\min}, T_{\max}>0$;\medskip
    
 \noindent    \textit{(ii)} the mass and the energy are conserved quantities, i.e.,  for any $ t\in (-T_{\min},T_{\max})$
 \begin{equation}\label{eq:mass}
M(u(t)):=\|u(t)\|_{L^2(\mathbb{R}^2)}^2=M(u_0)
\end{equation}
and 
\begin{equation}\label{eq:energy}
E(u(t)):=\frac12F(u(t))-\frac{1}{p+1}\|u(t)\|_{L^{p+1}(\mathbb{R}^2)}^{p+1}=E(u_0) 
\end{equation}
where, for $u$ of the form $u=\phi^\lambda+q\mathbb G^\lambda$ and $\Gamma^\lambda_\alpha$ a constant depending on $\alpha$ and $\lambda$,
\begin{equation}\label{quad-form}
F(u(t))=F_\alpha(u(t)):=\|\nabla\phi^\lambda\|_{L^2(\mathbb{R}^2)}^2+\lambda(\|\phi^\lambda\|_{L^2(\mathbb{R}^2)}^2-\|u\|_{L^2(\mathbb{R}^2)}^2)+\Gamma^\lambda_\alpha|q|^2, 
\end{equation}
respectively; \medskip

\noindent   \textit{(iii)} the solution also belongs to the Strichartz spaces $L_{\loc}^{q}((-T_{\min}, T_{\max}); H_{\alpha}^{1,\rho}(\mathbb R^2))$ where $(q,\rho)$ is any Strichartz admissible pair, i.e. $\frac1q=\frac12-\frac1\rho$, $\rho\geq 2$;\medskip
     
 \noindent    \textit{(iv)} we have the blow-up alternative, namely either the solution is global or $T_{\max}>0$ is finite and 
\[
\lim_{t\nearrow T_{\max}}\|u(t)\|_{H^1_{\alpha}(\mathbb{R}^2)}=+\infty.
\] 
A similar statement holds for $T_{\min}$;\medskip
    
\noindent    \textit{(v)} the solution map is continuous with respect to the initial data.
\end{theorem}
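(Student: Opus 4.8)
The plan is to run a Kato-type fixed-point scheme on the Duhamel formulation
\[
u(t)=e^{-it\Delta_\alpha}u_0\mp i\int_0^t e^{-i(t-s)\Delta_\alpha}(|u|^{p-1}u)(s)\,ds,
\]
and then to harvest statements (ii)--(v) from the quantitative contraction estimates. First I would fix $\lambda>|e_\alpha|$ so that the energy norm is realized as $\|u\|_{H^1_\alpha}\simeq\|(\lambda+\Delta_\alpha)^{1/2}u\|_{L^2}$, recording that $(\lambda+\Delta_\alpha)^{s/2}$ commutes with the propagator $e^{-it\Delta_\alpha}$. The linear ingredient is a family of Strichartz estimates adapted to the operator $\Delta_\alpha$: for every admissible pair $(q,\rho)$ with $1/q=1/2-1/\rho$ and $\rho\ge2$, one has $\|e^{-it\Delta_\alpha}f\|_{L^q_t H^{1,\rho}_\alpha}\lesssim\|f\|_{H^1_\alpha}$ together with the inhomogeneous counterpart for the Duhamel term, obtained from the dispersive bound for the two-dimensional point interaction by a $TT^*$ argument, possibly carrying the logarithmic corrections intrinsic to $d=2$.

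The crux is the nonlinear estimate. For $u=\phi^\lambda+q\mathbb G^\lambda\in H^1_\alpha$ the nonlinearity $|u|^{p-1}u$ does not retain the rigid structure $(\text{regular})+(\text{multiple of }\mathbb G^\lambda)$, and the logarithmic singularity of $\mathbb G^\lambda$ at the origin---precisely the feature that singles out the two-dimensional case---must be absorbed. I would split $|u|^{p-1}u$ according to the two constituents of $u$ and estimate each contribution in the dual Strichartz norm: the regular part is handled by the embeddings $H^1(\mathbb R^2)\hookrightarrow L^\rho(\mathbb R^2)$ for every $\rho<\infty$, so that any $p>1$ is energy-subcritical, while the singular contributions, in which $\nabla\mathbb G^\lambda\sim|x|^{-1}$ appears, are controlled by the Hardy inequality with logarithmic correction of Section~\ref{Sec-tool}, which bounds the resulting weighted quantities against $\|u\|_{H^1_\alpha}$. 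Carrying along the difference $|u|^{p-1}u-|v|^{p-1}v$ in the same spaces yields the Lipschitz bound needed both for the contraction and, later, for continuous dependence.

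With these estimates in hand, the map above is a contraction on a ball of $C([-T,T];H^1_\alpha)\cap L^q([-T,T];H^{1,\rho}_\alpha)$ for $T=T(\|u_0\|_{H^1_\alpha})$ small, which gives (i) and, by construction, the Strichartz regularity (iii). The maximal interval and the blow-up alternative (iv) are then standard: since the existence time depends only on an upper bound for $\|u(t)\|_{H^1_\alpha}$, a finite $T_{\max}$ along which the energy norm stayed bounded would allow re-solving past $T_{\max}$, contradicting maximality. Continuous dependence (v) follows by applying the same contraction estimates to two solutions and closing a Gronwall-type inequality in the Strichartz norms; the one subtlety is upgrading continuity to the strong $H^1_\alpha$ topology, which I would obtain by combining weak continuity with convergence of norms supplied by the conserved energy.

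For the conservation laws (ii), I would first establish them on a dense class of regular data, for which $u(t)$ is smooth enough to differentiate $M(u(t))$ and $E(u(t))$ in time---mass being preserved by the gauge invariance of the nonlinearity together with the self-adjointness of $\Delta_\alpha$, and energy by pairing the equation with $\partial_t\bar u$ through the quadratic form \eqref{quad-form}---and then pass to the limit using the continuous dependence of (v). Throughout, the main obstacle is exactly the nonlinear estimate in the energy space: making the logarithmic Hardy inequality interact correctly with the fractional operator $(\lambda+\Delta_\alpha)^{1/2}$, so that $|u|^{p-1}u$ is controlled in $H^1_\alpha$ despite the loss of the clean decomposition of $u$ under the nonlinearity.
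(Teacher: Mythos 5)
Your overall skeleton coincides with the paper's: a Kato-type fixed point on the Duhamel formula, Strichartz estimates for $e^{it\Delta_\alpha}$ (available from the cited works \cite{CMY19, CMY19b, Ya2021}), a Lipschitz estimate for the nonlinearity in Lebesgue norms (the paper's \eqref{kato1}, which indeed follows from H\"older and the embedding \eqref{Sob-emb}), and then (ii)--(v) harvested as in Cazenave's Chapter 4.4. But there is a genuine gap at the heart of the argument: you never resolve how to measure $|u|^{p-1}u$ with one derivative in an $\alpha$-adapted space, and in your last sentence you explicitly name this (``making the logarithmic Hardy inequality interact correctly with the fractional operator $(\lambda+\Delta_\alpha)^{1/2}$'') as the main obstacle without supplying the mechanism. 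That is not a technical loose end: since $|u|^{p-1}u$ does not retain the structure $\phi^\lambda+q\,\mathbb G^\lambda$, the defining description of $H^{1,r}_\alpha$ cannot be tested on it, and estimating $(\lambda+\Delta_\alpha)^{1/2}(|u|^{p-1}u)$ directly is exactly what one does not know how to do. The paper's key idea, absent from your proposal, is Proposition \ref{prop-space-equiv}: for $r<2$ one has $H^{1,r}_\alpha=H^{1,r}$, so choosing the dual exponent $r'=2-\epsilon<2$ in \eqref{kato2} removes the singular operator altogether and reduces the task to a \emph{classical} $H^{1,2-\epsilon}$ bound on the nonlinearity. This identification holds at energy regularity only in dimension two (Remark \ref{rem-char}), which is precisely why the scheme is 2D-specific; without it, or an equivalent substitute such as boundedness of $(\lambda+\Delta_\alpha)^{1/2}(\lambda-\Delta)^{-1/2}$ on $L^{r'}$, your fixed point cannot close.

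A second, related inaccuracy concerns where the logarithmic Hardy inequality actually enters. The contributions you single out, in which $\nabla\mathbb G^\lambda\sim|x|^{-1}$ appears, are not the delicate ones: in 2D, singularities of type $|x|^{-1}\log^{p-1}(|x|^{-1})$ are locally in $L^{2-\epsilon}$ and are dispatched by H\"older alone. In the paper, \eqref{log-hardy} with $a=2$, $b=4$ is used \emph{after} the factorization \eqref{loc-u-origin}, which transfers $\log(|x|^{-1})$ from the singular part onto the regular part: one must show that $h=\psi_1\phi/\log(|x|^{-1})$ belongs to $H^1$, and it is the term $|\phi|/\bigl(|x|\log^2(|x|^{-1})\bigr)$, created by differentiating $1/\log(|x|^{-1})$, that needs the weighted inequality; the Leibniz rule \eqref{eq:leibniz} and the Kato--Staffilani chain rule then complete \eqref{kato2}. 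Finally, a minor but real point: since $p>1$ may be close to $1$, the map $z\mapsto|z|^{p-1}z$ is not $C^2$, so running the contraction in a ball of $C_tH^1_\alpha\cap L^q_tH^{1,\rho}_\alpha$ as you propose would require derivative-level difference estimates that fail for low $p$; Kato's scheme (and the paper) deliberately asks for the Lipschitz bound \eqref{kato1} only in Lebesgue norms, contracts in the low-norm metric, and recovers the derivative bounds by weak compactness.
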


The strategy to prove Theorem \ref{thm-lwp} is to implement  Kato's approach based on the validity of certain  a-priori estimates on the nonlinearity, see conditions \eqref{kato1} and \eqref{kato2} in Section \ref{sec:LWP}. To achieve this, our main ideas are the followings:\smallskip

\noindent\textup{(i)} first, we reduce the proof of the crucial estimates to a classical Sobolev space framework, by means of a characterization of singular Sobolev spaces for certain index; \smallskip

\noindent\textup{(ii)} secondly, we remove the singularity of $\log(|x|)$-type from the singular part of the solution (see Section \ref{Sec-tool} for the structure of an energy solution to the Cauchy problem  \eqref{NLS}), by making it appear  as a factor of the regular part of the solution. Of course, this does not  completely remove the singularity, but by using a generalized Hardy-type inequality, we can properly control the new singular term.  Consequently, we can prove the validity of Kato's hypothesis and establish the well-posedness result of Theorem \ref{thm-lwp}. \medskip 

\noindent It is worth noticing that Strichartz estimates for the linear operator $e^{it\Delta_{\alpha}}$ also play a fundamental role when proving existence results. For the operator $\Delta_{\alpha}$ in the two-dimensional case, such estimates have been established recently in \cite{CMY19, CMY19b, Ya2021}.\\

It is noteworthy that a version of  Theorem \ref{thm-lwp} was previously proven in a paper by Fukaya, Ikeda, and the second author (see \cite{FGI22}). The strategy used to establish local well-posedness in \cite{FGI22} is based on the general abstract theory developed in Okazawa, Suzuki, and Yokota (see \cite{OSY12}), which extends the results of Cazenave \cite{Cazenave}. The approach in \cite{OSY12} is quite involved, and to prove the existence, regularity, and stability results as in Theorem \ref{thm-lwp}, several conditions need to be satisfied. Additionally, the theory in \cite{OSY12} was specifically motivated by the infeasibility of Kato's method for the NLS equation with an inverse square potential. Therefore, we believe our approach is more suitable for addressing \eqref{NLS}, providing a shorter and more elegant proof. Furthermore, the method of \cite{OSY12} heavily relies on a regularization procedure, which we completely avoid here by using Kato's approach. \\
\noindent We also point-out that neither in \cite{FGI22} nor in the original paper \cite{OSY12} the blow-up alternative (i.e.,  Theorem \ref{thm-lwp} point \textit{(iv)}) was mentioned. Indeed, the blow-up alternative follows directly from the fact that the time interval of existence depends on the norm of the initial value. Indeed, this is clearly evident when applying a fixed point argument in the spirit of Kato (see \cite{Cazenave}).\smallskip

In conclusion, regarding the local well-posedness problem, we note that existence results were provided in the domain space (see \eqref{def-H2-alpha} below) in \cite{CFN2021}. With our approach, we are able to establish a local well-posedness theory in the more natural energy space.\\

The second result  in our paper concerns long time behaviour of solutions to \eqref{NLS} in the focusing regime, namely when the minus sign is considered in the nonlinearity of \eqref{NLS}. Specifically, we investigate the existence of infinite-variance, finite-time blowing-up solutions for positive energies. Before stating it, we introduce a few quantities  which will be explained further in Section \ref{Sec-tool}: for $u\in H^1_{\alpha}(\mathbb R^2)$ and $\omega\in\mathbb R$, we denote by $S_\omega(u)=E(u)+\omega M(u)$ the action functional, by $v_\omega$ a ground state, and by 
\[
P(u)=F(u)-\frac{p-1}{p+1}\|u\|_{L^{p+1}(\mathbb{R}^2)}^{p+1}+\frac{|q|^2}{4\pi}\quad \hbox{ with }\quad  u=\phi^\lambda+q\mathbb G^\lambda
\]
the Pohozaev functional. Then, our achievement reads as follows. 

\begin{theorem}\label{thm-blowup}
Let $\alpha\in \mathbb R$, $3<p\leq 5$ and consider the focusing case, i.e., the nonlinearity is $-|u|^{p-1}u$. Let $u_0\in H^1_{\alpha}(\mathbb{R}^2)$ an initial datum such that   $S_{\omega}(u_0)<S_{\omega}(v_{\omega})$, $E(u_0)\geq0$,  and $P(u_0)<0$. Suppose that $u_0(x)=u_0(|x|)$. Then $T_{\max}<\infty$. A similar statement holds for $T_{\min}.$ 
\end{theorem}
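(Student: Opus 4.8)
The plan is to prove blow-up via a concavity/virial argument adapted to the infinite-variance setting. The natural quantity to monitor is the localized virial $V_R(t) = \int \chi_R(|x|) |u(t,x)|^2\,dx$, where $\chi_R(r) = R^2 \chi(r/R)$ for a suitable cutoff $\chi$ that equals $|x|^2$ near the origin and is bounded for large $|x|$; the radial symmetry of the datum, preserved by the flow since $\Delta_\alpha$ commutes with rotations, is essential so that $V_R$ and its derivatives depend only on the radial profile. First I would compute $\frac{d}{dt}V_R$ and then $\frac{d^2}{dt^2}V_R$, obtaining a virial identity of the form $\frac{d^2}{dt^2}V_R(t) = 8 P(u(t)) + \mathcal{E}_R(t)$, where $P$ is the Pohozaev functional defined in the excerpt and $\mathcal{E}_R(t)$ collects all error terms coming from the region $|x| \gtrsim R$ where the cutoff differs from the exact quadratic weight.

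\medskip

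The key structural input is the variational characterization coming from the condition $S_\omega(u_0) < S_\omega(v_\omega)$. I would first establish, via the conservation of $S_\omega$ (which follows from conservation of mass and energy, Theorem \ref{thm-lwp}(ii)) and a standard continuity/connectedness argument, that the sign of $P$ is preserved along the flow: since $P(u_0) < 0$ and the datum lies below the ground-state action level, the solution remains in the region $\{P(u) < 0\}$ for all times in its interval of existence, and moreover one obtains a \emph{quantitative} upper bound $P(u(t)) \leq -\delta < 0$ for some $\delta$ depending on the gap $S_\omega(v_\omega) - S_\omega(u_0)$. This coercivity is the mechanism that turns the leading term $8P(u(t))$ in the virial identity into a strictly negative drift. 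Combining this with the energy constraint $E(u_0) \geq 0$ allows the leading term to be bounded above in terms of conserved quantities.

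\medskip

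The main obstacle, and the reason the cutoff is needed, is controlling the error term $\mathcal{E}_R(t)$ together with the singular contribution from the point interaction. Unlike the classical NLS virial identity, here $u = \phi^\lambda + q\mathbb{G}^\lambda$ carries the $\log|x|$-type singularity $\mathbb{G}^\lambda$ near the origin, which is precisely why the Pohozaev functional $P$ contains the extra term $\frac{|q|^2}{4\pi}$; I would need to verify that the commutator of $\Delta_\alpha$ with the multiplication operator by the weight reproduces exactly this coefficient, using the boundary condition encoded in the domain of $\Delta_\alpha$ near $x=0$. For the far-field errors I would estimate $\mathcal{E}_R(t)$ by $\|\nabla \phi^\lambda\|_{L^2(|x|>R)}^2$ plus nonlinear remainders, and crucially exploit the radial decay of $H^1$ functions (a Strauss-type estimate, $|u(x)| \lesssim \|u\|_{H^1} |x|^{-1/2}$ for radial $u$ in two dimensions, with logarithmic care for the singular component) to show $\mathcal{E}_R(t) = o_R(1)$ uniformly, or at least bounded by a small multiple of $|P(u(t))|$ plus a constant. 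The restriction $3 < p \leq 5$ enters exactly here: the lower bound $p>3$ guarantees the nonlinear virial term has the correct sign-definiteness relative to $P$ and that the radial Gagliardo-Nirenberg control of the far-field nonlinearity closes, while $p \leq 5$ controls the growth so the remainder remains subordinate. Choosing $R$ large enough to absorb $\mathcal{E}_R$ yields $\frac{d^2}{dt^2}V_R(t) \leq -c < 0$ with $V_R \geq 0$, and since a nonnegative function cannot have a uniformly negative second derivative on an infinite interval, the maximal existence time must be finite, giving $T_{\max} < \infty$ by the blow-up alternative of Theorem \ref{thm-lwp}(iv).
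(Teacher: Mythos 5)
Your overall strategy --- a localized virial $V_R$ with weight equal to $|x|^2$ near the origin (modulo normalization, the paper uses $|x|^2/2$ and gets $4P$ rather than $8P$), regularization of the point singularity through the quadratic vanishing of the weight, Strauss decay for the radial far field, and variational trapping below the ground-state action --- is the same as the paper's, and most of your ingredients (persistence of $P<0$ with a quantitative gap, the roles of $p>3$ and $p\leq 5$, the decay of $\mathbb{G}^\lambda$) are on target. The genuine gap is in your final absorption step. You assert $\mathcal{E}_R(t)=o_R(1)$ uniformly in time, ``or at least bounded by a small multiple of $|P(u(t))|$ plus a constant,'' and then conclude by choosing $R$ large. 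Neither fallback is available. What the error estimates actually deliver (and what the paper proves) is
\begin{equation}
\mathcal{E}_R(t)\lesssim o_R(1)+o_R(1)\,\|u(t)\|_{H^1_\alpha}^2,
\end{equation}
and $\|u(t)\|_{H^1_\alpha}$ is exactly the quantity expected to blow up, so it cannot be treated as bounded: no fixed choice of $R$ makes this term small uniformly on the existence interval. Nor does $|P(u(t))|$ dominate $\|u(t)\|_{H^1_\alpha}^2$, since $P$ contains the negative contribution $-\frac{p-1}{p+1}\|u\|_{L^{p+1}}^{p+1}$ and can stay of moderate size through cancellation while the $H^1_\alpha$-norm grows arbitrarily large. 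Your quantitative bound $P(u(t))\leq-\delta$ (the paper's Proposition \ref{prop:P-neg}) is therefore too weak to close the concavity argument.

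The paper closes this gap with a strictly stronger coercivity statement, \eqref{Q-control-ref}: there exist $c_1,c_2>0$, uniform in $t$, such that $P(u(t))+c_1\|u(t)\|_{H^1_\alpha}^2\leq-c_2$. Its proof is not a soft continuity argument: it uses the identity $F(u)=\frac{2}{p-3}\bigl((p-1)E(u)-P(u)+\frac{|q|^2}{4\pi}\bigr)$ --- this is where $p>3$ and $E(u_0)\geq0$ genuinely enter --- together with conservation of mass and energy, and, a step entirely absent from your sketch, the freedom to take $\lambda$ large in the decomposition $u=\phi^\lambda+q\mathbb{G}^\lambda$, so that $\Gamma^\lambda_\alpha$ grows like $\frac{\log\lambda}{4\pi}$ and the otherwise uncontrolled $|q|^2$ coefficient can be absorbed. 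With \eqref{Q-control-ref} in hand, the term $o_R(1)\|u(t)\|_{H^1_\alpha}^2$ in the remainder is absorbed by $c_1\|u(t)\|_{H^1_\alpha}^2$, and the residual constant $o_R(1)$ is handled via the uniform lower bound $\inf_t\|u(t)\|_{H^1_\alpha}^2\geq c>0$ of \eqref{blow-est-2} (itself a consequence of $P(u(t))\leq-\delta$ and the Sobolev embedding \eqref{Sob-emb}). Only then does choosing $R$ large give $\frac{d^2}{dt^2}V_R\leq-c<0$ and the concavity conclusion. Without an estimate of the type \eqref{Q-control-ref}, your argument does not close; I would also note that the virial computation should first be carried out for $u\in H^2_\alpha$ (using the paper's Lemma \ref{ble5}, which replaces your heuristic commutator computation) and extended by density, a point your sketch passes over.
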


The results mentioned above concerning the unperturbed Laplace operator and negative energies traces back to the work of Ogawa and Tsutsumi (see \cite{OT91}). Their research extended the findings of Glassey (see \cite{Glassey}), who focused on finite-variance solutions. More recently, Holmer and Roudenko (see \cite{HR2007}) expanded the results of \cite{OT91} to include positive energies.\\

For the point interaction equation \eqref{NLS}, we are only aware of a recent result by Finco and Noja \cite{FN2023}. They studied solutions with finite variance, specifically those solutions to \eqref{NLS} that decay in space fast enough to satisfy the condition $u\in\Sigma:=L^2(|x|^2dx)$. For solutions with initial data in $\Sigma$, the authors of \cite{FN2023} confirmed the validity of a virial identity in the context of point interaction, similar to the one for the NLS equation with an unperturbed Laplace operator. Specifically, they demonstrated the virial identity $\displaystyle\frac{d^2}{dt^2}\int_{\mathbb R^2}|x|^2|u(t)|^2dx=8P(u(t))$. The negativity of $P(u(t))$ directly implies a blow-up result through a convexity argument, thereby proving Glassey's result in this scenario.\smallskip

In this paper, we explore a potential analogy to the results by Ogawa and Tsutsumi \cite{OT91} (and \cite{HR2007}) for \eqref{NLS} with positive energies. Theorem \ref{thm-blowup} shows that, even in the case of point interaction, the same range of nonlinearities as in \cite{OT91, HR2007} applies. Our proof also relies on virial estimates, although it is more complex  compared to the proof for solutions in $\Sigma$ and the case involving the free Laplacian. We refer to Section \ref{Sec-blowup} for more details.

\section{Preliminary tools}\label{Sec-tool}
In this section, we rigorously introduce the mathematical framework we are going to work with, we give useful properties of the singular operator appearing in \eqref{NLS}, and we show the crucial estimates used to prove our main results. Since now on, as we specialize to the two-dimensional case, we omit the space $\mathbb R^2$ when no confusion may arise.

\subsection{Properties of the  singular Laplacian operator} 
We begin with a rigorous definition of the singular Laplace operator $\Delta_\alpha$, and we introduce the functional setting used in the local well-posedness theory we develop in section \ref{sec:LWP}. \\

Let us recall the following: given the domain 
\begin{equation}\label{def-H2-alpha}
\mathcal D(\Delta_{\alpha})=\left\{u\in L^2 \hbox{ s.t. } u=\phi^\lambda+q\mathbb{G}^\lambda,  \hbox{ with } \phi^\lambda\in H^2 \hbox{ and } q=\frac{\phi^\lambda(0)}{\Gamma^\lambda_\alpha}\right\},
\end{equation}
where  
\begin{equation}\label{def-G}
\mathbb{G}^\lambda=(-\Delta+\lambda)^{-1}\delta,
\end{equation}
and 
\begin{equation}\label{def-Gamma}
\Gamma^\lambda_\alpha=\alpha+\frac{\gamma}{2\pi}+\frac{\log(\sqrt\lambda/2)}{2\pi},
\end{equation}
 $\gamma$ being the Euler-Mascheroni constant, a classical definition of the action of $\Delta_{\alpha}$ as in \cite{AGKH87} is
\begin{equation}\label{def-oper}
(\lambda + \Delta_{\alpha}) u=(\lambda-\Delta) \phi^\lambda
\end{equation}
with \eqref{def-oper} being equivalent to 
the action of $\Delta_{\alpha}$  defined as
\begin{equation}\label{def-equiv}
\Delta_{\alpha} u=-\Delta \phi^\lambda-\lambda q \mathbb{G}^\lambda.
\end{equation}
The space described in \eqref{def-H2-alpha} is denoted by $H^2_\alpha$. \smallskip

\noindent The operator  $\lambda + \Delta_{\alpha}$ with $\lambda > |e_\alpha|$ is self-adjoint and positive one. The domain of its square root can be characterized (see \cite{MO17})
\[
\mathcal D((\lambda+\Delta_\alpha)^{1/2})=\left\{u\in L^2 \hbox{ s.t. } \exists\, q\in\mathbb C \hbox{ and } \phi^\lambda\in H^1\, : \, u=\phi^\lambda+q\mathbb{G}^\lambda\right\},
\]
and this becomes a domain of the corresponding quadratic  form  
\[
F(u)=\|\nabla\phi^\lambda\|_{L^2}^2+\lambda(\|\phi^\lambda\|_{L^2}^2-\|u\|_{L^2}^2)+\Gamma^\lambda_\alpha|q|^2
\]
already appeared in \eqref{quad-form}.
Therefore,  $\mathcal D(F_\alpha)$ is the domain of the square root of the operator $\Delta_{\alpha}+\lambda$:
\begin{equation}\label{def-sing-space}
\mathcal D(F_\alpha)=\mathcal D((\Delta_{\alpha}+\lambda)^{1/2}):= H^1_\alpha,  \hbox{\quad for \quad } \lambda>-e_\alpha =|e_\alpha|,
\end{equation}
with  $e_\alpha$  being the only simple negative eigenvalue of $\Delta_{\alpha}$.
Note that   
\[
\|u\|_{H^1_\alpha}\simeq\|\phi^\lambda\|_{H^1}+|q|
\]
where $H^1$ is the classical Sobolev space $H^1=(-\Delta+1)^{-1}L^2$.\\
In this subsection we will resume some properties of the Green function $\mathbb{G}^\lambda$, the fundamental solution of $(\lambda - \Delta)$ for any
\begin{equation}\label{eq.ome1}
\lambda \in \mathbb{R} \setminus \sigma(\Delta) = \mathbb{R} \setminus (-\infty,0].
\end{equation}
First of all, it can be represented, in the 2D case, as
\begin{equation} \label{eq.defGgeneral}
\mathbb{G}^\lambda(x) = (2\pi)^{-1}  K_{0}
(\sqrt{\lambda} |x|).
\end{equation}
Here  $K_\nu$ is the second type modified Bessel function of order $\nu \geq 0$, also called Macdonald function.
The asymptotic expansions at zero and infinity are also well known. Indeed, we have
\begin{equation}\label{eq.asymptotic zero}
\mathbb{G}^\lambda(x)=-\frac{1}{2\pi}\log(|x|)-\Gamma^\lambda-\alpha+o(|x|)\ \quad\hbox{ for }\quad x\to 0,
\end{equation}
while at infinity we have the exponential decay (see \cite[relation (20), section 7.23]{W95})
\begin{equation}\label{eq.asymptotic infinity}
\mathbb{G}^1(x)=e^{-|x|}\left(\sqrt{\frac{\pi}{2|x|}} +O\left(|x|^{-3/2}\right)   \right), \quad\hbox{ for }\quad x\to \infty.
\end{equation}
We also recall that $\mathbb G^\lambda\in L^{\rho}$ for any $\rho\geq 1$.\\

For our dynamical result, two  proprieties are essential: the unique representation of an element in the domain $\mathcal D(\Delta_{\alpha})$ for a given $\lambda$, and the independence of the domain and the action with respect to the choice of $\lambda$. These are, respectively, the content of the following two lemmas, whose proofs can be found in \cite{GR24}.

\begin{lemma} For a fixed $\lambda$, suppose that for an element $u\in \mathcal D(\Delta_{\alpha})$ we have two representations
\[
\begin{aligned}
u&=\phi_1^\lambda+q_1\mathbb G^{\lambda},\\
u&=\phi_2^\lambda+q_2\mathbb G^{\lambda},
\end{aligned}
\]
where $q_j=\frac{\phi_j(0)}{\Gamma^{\lambda}_\alpha}$, $j=1,2$. Then
\[
\phi_1^\lambda=\phi_2^\lambda \hbox{\, and \,} q_1=q_2.
\]
\end{lemma}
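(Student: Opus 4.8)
The plan is to exploit the mismatch in regularity between the regular part $\phi^\lambda$, which lies in $H^2$, and the singular Green's function $\mathbb{G}^\lambda$, which fails to belong even to $H^1$. Since all terms involved are square integrable (recall $\phi_j^\lambda\in H^2\subset L^2$ and $\mathbb{G}^\lambda\in L^\rho$ for every $\rho\geq 1$, in particular $\mathbb{G}^\lambda\in L^2$), subtracting the two representations is a legitimate operation in $L^2$. Setting $\psi:=\phi_1^\lambda-\phi_2^\lambda$ and $c:=q_1-q_2$, and using that both right-hand sides equal the same $u$, I obtain the identity
\[
\psi=-c\,\mathbb{G}^\lambda \qquad \text{in } L^2 .
\]
By the definition of $\mathcal D(\Delta_\alpha)$ in \eqref{def-H2-alpha}, both $\phi_1^\lambda$ and $\phi_2^\lambda$ belong to $H^2$, hence $\psi\in H^2\subset H^1$.

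Next I would invoke the singularity of the Green's function. As recorded after \eqref{def-sing-space} and made explicit by the asymptotic expansion \eqref{eq.asymptotic zero}, near the origin $\mathbb{G}^\lambda(x)=-\frac{1}{2\pi}\log(|x|)+O(1)$, so that $\nabla\mathbb{G}^\lambda$ behaves like $|x|^{-1}$ and is not square integrable on any neighbourhood of $0$; in particular $\mathbb{G}^\lambda\notin H^1$. Consequently, if $c\neq 0$ the identity $\mathbb{G}^\lambda=-\psi/c$ would force $\mathbb{G}^\lambda\in H^1$, a contradiction. Hence $c=0$, i.e.\ $q_1=q_2$, and the displayed identity then gives $\psi=0$, i.e.\ $\phi_1^\lambda=\phi_2^\lambda$.

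I expect no genuine obstacle here: the only point requiring care is the verification that $\mathbb{G}^\lambda\notin H^1$, which is precisely the content of the log-type singularity encoded in \eqref{eq.asymptotic zero}. Alternatively, one can argue by matching the singular profiles directly. Since $H^2(\mathbb{R}^2)\hookrightarrow C^0$, both $\phi_j^\lambda$ are continuous, hence bounded, near $0$, so the coefficient of $\log(|x|)$ in the expansion of $u$ about the origin must coincide in the two representations; this yields $-q_1/(2\pi)=-q_2/(2\pi)$, and again $q_1=q_2$, after which $\phi_1^\lambda=\phi_2^\lambda$ follows. Note that the constraint $q_j=\phi_j^\lambda(0)/\Gamma^\lambda_\alpha$ is consistent with, but not actually needed for, the argument: uniqueness is a purely linear-independence statement between the $H^2$ component and the genuinely singular profile $\mathbb{G}^\lambda$.
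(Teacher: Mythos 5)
Your proof is correct; note that the paper does not actually prove this lemma at all, but delegates it to \cite{GR24}, so there is no internal proof to compare against. Your argument is the natural one: subtracting the two representations gives $\phi_1^\lambda-\phi_2^\lambda=-(q_1-q_2)\mathbb{G}^\lambda$ in $L^2$, and the regularity mismatch ($\phi_j^\lambda\in H^2$ while $\mathbb{G}^\lambda\notin H^1$, as the paper itself asserts in the Introduction) forces $q_1=q_2$ and then $\phi_1^\lambda=\phi_2^\lambda$. Two small points of care. First, your claim that $\nabla\mathbb{G}^\lambda$ behaves like $|x|^{-1}$ near the origin does not follow formally from differentiating the expansion \eqref{eq.asymptotic zero}; it does hold, via the Bessel identity $K_0'=-K_1$ and $K_1(z)\sim z^{-1}$ as $z\to 0$, and in two dimensions $\int_{|x|<1}|x|^{-2}\,dx=\infty$, so indeed $\mathbb{G}^\lambda\notin H^1$ — but this deserves a word of justification. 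Second, your alternative route is actually the cleaner one and avoids derivatives entirely: since $H^2(\mathbb{R}^2)\hookrightarrow C^0\cap L^\infty$ (here $s=2>d/2=1$), the difference $\phi_1^\lambda-\phi_2^\lambda$ is bounded near the origin, while $\mathbb{G}^\lambda$ diverges logarithmically there by \eqref{eq.asymptotic zero}, so the a.e.\ identity $\phi_1^\lambda-\phi_2^\lambda=-(q_1-q_2)\mathbb{G}^\lambda$ immediately gives $q_1=q_2$. Your closing observation is also accurate: the constraint $q_j=\phi_j^\lambda(0)/\Gamma^\lambda_\alpha$ plays no role, since uniqueness of the decomposition is purely a linear-independence statement between $H^2$ (or even $L^\infty_{\loc}$) functions and the singular profile $\mathbb{G}^\lambda$.
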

\begin{lemma} For $\lambda>|e_\alpha|$, the domain of the operator $(\lambda+\Delta_{\alpha})$ and its action are independent of the choice of $\lambda$.
\end{lemma}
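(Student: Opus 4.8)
The plan is to compare the $\lambda$- and $\mu$-decompositions of a fixed element and to transport one into the other via the resolvent identity. Fix $\lambda,\mu>|e_\alpha|$ and write $R_\lambda=(-\Delta+\lambda)^{-1}$. Applying the resolvent identity $R_\lambda-R_\mu=(\mu-\lambda)R_\lambda R_\mu$ to $\delta$ gives
\[
\mathbb{G}^\lambda-\mathbb{G}^\mu=(\mu-\lambda)R_\lambda\mathbb{G}^\mu.
\]
Since $\mathbb{G}^\mu\in L^2$ (the logarithmic singularity at the origin is square integrable in dimension two, and the decay at infinity is exponential by \eqref{eq.asymptotic infinity}), applying $R_\lambda$ gains two derivatives, so the regularised difference $g:=\mathbb{G}^\lambda-\mathbb{G}^\mu$ belongs to $H^2$. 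This is the crucial gain: the non-$H^1$ singularities of the two Green functions cancel exactly.

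Next I would transport the decomposition. Given $u\in\mathcal D(\Delta_\alpha)$ written for the parameter $\lambda$ as $u=\phi^\lambda+q\mathbb{G}^\lambda$, with $\phi^\lambda\in H^2$ and $q=\phi^\lambda(0)/\Gamma^\lambda_\alpha$, I set $\phi^\mu:=\phi^\lambda+q\,g\in H^2$ so that $u=\phi^\mu+q\mathbb{G}^\mu$, with the charge $q$ unchanged. The lemma then reduces to two checks. First, the boundary condition for the parameter $\mu$ must hold, i.e. $q=\phi^\mu(0)/\Gamma^\mu_\alpha$; point evaluation is legitimate because $H^2\hookrightarrow C^0$ in two dimensions. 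Using the expansion \eqref{eq.asymptotic zero}, the leading logarithms of $\mathbb{G}^\lambda$ and $\mathbb{G}^\mu$ cancel in $g$ and the finite parts give $g(0)=\Gamma^\mu_\alpha-\Gamma^\lambda_\alpha$; combined with $\phi^\lambda(0)=q\Gamma^\lambda_\alpha$ this yields $\phi^\mu(0)=q\Gamma^\mu_\alpha$, as required.

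The second check is that the action is unchanged. From $(-\Delta+\lambda)\mathbb{G}^\lambda=\delta$ one has $-\Delta\mathbb{G}^\lambda=\delta-\lambda\mathbb{G}^\lambda$, and likewise for $\mu$; hence $-\Delta g=-\lambda\mathbb{G}^\lambda+\mu\mathbb{G}^\mu$, where the two copies of $\delta$ cancel consistently with $g\in H^2$. A direct computation then gives
\[
-\Delta\phi^\mu-\mu q\mathbb{G}^\mu=-\Delta\phi^\lambda-q\lambda\mathbb{G}^\lambda+q\mu\mathbb{G}^\mu-\mu q\mathbb{G}^\mu=-\Delta\phi^\lambda-\lambda q\mathbb{G}^\lambda,
\]
which is exactly the action computed with the parameter $\lambda$ via \eqref{def-equiv}. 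Exchanging the roles of $\lambda$ and $\mu$ shows the two domains coincide as sets, completing the proof. The only delicate point is the bookkeeping at the origin in the boundary-condition step: one must track the normalisation constants in \eqref{eq.asymptotic zero} and \eqref{def-Gamma} carefully so that the divergent logarithms cancel and the finite parts combine precisely into $\Gamma^\mu_\alpha-\Gamma^\lambda_\alpha$. Everything else is a routine application of the resolvent identity.
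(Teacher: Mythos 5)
Your proof is correct. Note that the paper does not actually contain a proof of this lemma---it defers both this and the preceding uniqueness-of-representation lemma to \cite{GR24}---so there is no in-paper argument to compare against; your resolvent-identity argument is the standard one for point interactions and is almost certainly the substance of the cited proof. Every step checks out: $g=\mathbb{G}^\lambda-\mathbb{G}^\mu=(\mu-\lambda)(-\Delta+\lambda)^{-1}\mathbb{G}^\mu\in H^2$ since $\mathbb{G}^\mu\in L^2$ and the resolvent gains two derivatives (on the Fourier side $\hat g(\xi)=(\mu-\lambda)(|\xi|^2+\lambda)^{-1}(|\xi|^2+\mu)^{-1}$, so in fact $g\in H^s$ for every $s<3$ in two dimensions); point evaluation via $H^2\hookrightarrow C^0$ is legitimate; the finite parts of the expansions give $g(0)=\frac{1}{4\pi}\log(\mu/\lambda)=\Gamma^\mu_\alpha-\Gamma^\lambda_\alpha$, which is the key bookkeeping identity---and observe that your computation is robust against the slip in the constant of \eqref{eq.asymptotic zero} as printed (from $K_0(z)=-\log(z/2)-\gamma+o(1)$ the finite part should read $-(\Gamma^\lambda_\alpha-\alpha)$, but any $\alpha$-dependent term cancels in the difference anyway); and the verification that the $\mu$-action reproduces \eqref{def-equiv}, with the two copies of $\delta$ cancelling in $-\Delta g$, is exactly right, with the symmetry in $\lambda,\mu$ then giving equality of the domains as sets. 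A minor remark: your argument needs only $\lambda,\mu$ in the resolvent set of $-\Delta$, so the hypothesis $\lambda>|e_\alpha|$ in the statement plays no role in your proof; it matters elsewhere, for positivity of $\lambda+\Delta_\alpha$ and the square-root/form-domain construction.
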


\subsection{Sobolev and Logarithmic Hardy inequalities} 

A natural extension of the $H^1_{\alpha}$ as defined in \eqref{def-sing-space} for other index of summability is 
\begin{equation}\label{def-sing-space-gen}
H^{1,r}_\alpha =\left\{u \, \hbox{ s.t. } \, \exists\, q\in\mathbb C \hbox{ and } \phi^\lambda\in H^{1,r}\, : \, u=\phi^\lambda+q\mathbb{G}^\lambda\right\},
\end{equation}
where $H^{1,r}=(1-\Delta)^{-1/2}L^r$ is the usual Sobolev space. 
As for the classical $H^{1}$ space, also in the presence of a point interaction we have the Sobolev embedding 
\begin{equation}\label{Sob-emb}
 H^{1}_{\alpha}\hookrightarrow L^{\rho} \hbox{ for any } \rho\geq2,   
\end{equation}
namely there exists a $C=C(\rho)>0$ such that $\|u\|_{L^{\rho}}\leq C\|u\|_{H^{1}_{\alpha}}$, provided $\lambda>|e_\alpha|$. Indeed, 
\[
\|u\|_{L^{\rho}}\lesssim \|\phi^\lambda\|_{L^{\rho}}+|q|\|\mathbb G^\lambda\|_{L^{\rho}}\lesssim \|\phi^\lambda\|_{H^1}+|q|\sim \|u\|_{H^1_{\alpha}}.
\]
Beside the above embedding, a  characterization property will be  crucially used in establishing a well-posedness theory. Precisely, we have the following (for a proof we refer to \cite{GR24}).
\begin{prop}\label{prop-space-equiv}
For any $r<2$, $H^{1,r}_{\alpha}=H^{1,r}$.
\end{prop}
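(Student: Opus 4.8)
The plan is to prove the two inclusions separately, noting that the entire substance lies in showing that the Green function itself belongs to $H^{1,r}$ when $r<2$. The inclusion $H^{1,r}\subseteq H^{1,r}_{\alpha}$ is immediate from the definition \eqref{def-sing-space-gen}: given $u\in H^{1,r}$, the trivial decomposition $u=\phi^\lambda+q\mathbb G^\lambda$ with $q=0$ and $\phi^\lambda=u$ is admissible. For the reverse inclusion it suffices to establish that $\mathbb G^\lambda\in H^{1,r}$; indeed, once this is known, any $u=\phi^\lambda+q\mathbb G^\lambda$ with $\phi^\lambda\in H^{1,r}$ and $q\in\mathbb C$ is a sum of two elements of $H^{1,r}$, hence lies in $H^{1,r}$.

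To show $\mathbb G^\lambda\in H^{1,r}$ I would use that, for $1<r<\infty$, the Bessel potential space $H^{1,r}=(1-\Delta)^{-1/2}L^r$ coincides with the Sobolev space $\{f\in L^r:\nabla f\in L^r\}$ with equivalent norms, as a consequence of the $L^r$-boundedness of the Riesz transforms. The problem therefore reduces to verifying $\mathbb G^\lambda\in L^r$ and $\nabla\mathbb G^\lambda\in L^r$. The first membership is already at our disposal, since $\mathbb G^\lambda\in L^{\rho}$ for every $\rho\geq 1$, so the whole matter comes down to the integrability of the gradient.

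Here the explicit representation \eqref{eq.defGgeneral} is the natural tool. Differentiating $\mathbb G^\lambda(x)=(2\pi)^{-1}K_0(\sqrt\lambda|x|)$ and using $K_0'=-K_1$ gives $\nabla\mathbb G^\lambda(x)=-(2\pi)^{-1}\sqrt\lambda\,K_1(\sqrt\lambda|x|)\,\tfrac{x}{|x|}$, whence $|\nabla\mathbb G^\lambda(x)|=(2\pi)^{-1}\sqrt\lambda\,K_1(\sqrt\lambda|x|)$. From the standard small-argument asymptotics $K_1(z)\sim z^{-1}$ as $z\to 0^+$ one obtains $|\nabla\mathbb G^\lambda(x)|\sim(2\pi|x|)^{-1}$ near the origin, while the exponential decay of $K_1$ at infinity (compatible with \eqref{eq.asymptotic infinity}) makes the behaviour at the tail harmless. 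One should also record, as a preliminary point, that since $\mathbb G^\lambda$ carries only a logarithmic singularity its distributional gradient agrees with the pointwise expression above, with no spurious contribution supported at $\{0\}$; this is the familiar fact that $\log|x|\in W^{1,s}_{\mathrm{loc}}(\mathbb R^2)$ for $s<2$.

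I expect the only real obstacle to be the sharp integrability computation at the origin, which is exactly what dictates the range of $r$. In dimension two, $\int_{|x|\leq 1}|x|^{-r}\,dx=2\pi\int_0^1\rho^{1-r}\,d\rho<\infty$ precisely when $r<2$, so $\nabla\mathbb G^\lambda\in L^r$ if and only if $r<2$. Combined with $\mathbb G^\lambda\in L^r$, this yields $\mathbb G^\lambda\in H^{1,r}$ for $1<r<2$ and hence the claimed identity $H^{1,r}_{\alpha}=H^{1,r}$. The threshold is sharp in the expected way: for $r\geq 2$ the function $|x|^{-1}$ fails to be $r$-integrable near the origin, so $\mathbb G^\lambda\notin H^{1,r}$, consistent with the earlier observation that $\mathbb G^\lambda\notin H^1(\mathbb R^2)$.
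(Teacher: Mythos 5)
Your argument is correct, and it is worth noting that the paper itself does not prove Proposition \ref{prop-space-equiv} at all: it defers to the reference \cite{GR24}, so your write-up is a genuine, self-contained substitute rather than a reproduction. The comparison with the cited route is instructive. Remark \ref{rem-char} records the general statement $H^{s,p}_{\alpha}(\mathbb R^d)=H^{s,p}(\mathbb R^d)$ for $s<\frac{d}{p}-d+2$, $d=2,3$, and the natural proof at that level of generality goes through locating $\delta$ in the Bessel-potential scale ($\delta\in H^{\sigma,p}$ iff $\sigma<\frac{d}{p}-d$) and the mapping property $(\lambda-\Delta)^{-1}:H^{\sigma,p}\to H^{\sigma+2,p}$, which yields $\mathbb{G}^\lambda\in H^{s,p}$ in the stated range; your explicit computation with $K_0'=-K_1$ and the small-argument asymptotics $K_1(z)\sim z^{-1}$ is the elementary specialization of this to $s=1$, $d=2$, and it buys a direct, sharp and inspectable proof (including the correct identification of the threshold $r=2$ and the absence of a singular contribution to the distributional gradient, which you rightly flag) at the cost of not generalizing cleanly to fractional $s$. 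Two small points to tighten. First, you tacitly work with $1<r<2$, which is fine --- Bessel spaces $H^{1,r}=(1-\Delta)^{-1/2}L^r$ and the identification with $\{f\in L^r:\nabla f\in L^r\}$ via Riesz transforms require $r>1$, and the paper only ever uses $r=2-\epsilon\in(1,2)$ --- but since the proposition reads ``for any $r<2$'' you should state this restriction explicitly. Second, the paper later uses the proposition in the form $\|f\|_{H^{1,\sigma}_\alpha}\sim\|f\|_{H^{1,\sigma}}$; since for $r<2$ the decomposition $u=\phi^\lambda+q\mathbb{G}^\lambda$ is no longer unique (one can absorb $q\mathbb{G}^\lambda$ into $\phi^\lambda$), the norm on $H^{1,r}_\alpha$ must be read as an infimum over decompositions, and then the two-sided norm equivalence follows from your set-theoretic argument together with the finiteness of $\|\mathbb{G}^\lambda\|_{H^{1,r}}$: one direction by choosing $q=0$, the other by the triangle inequality. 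Adding one sentence to this effect would make your proof deliver exactly what the paper consumes.
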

\begin{remark}\label{rem-char}
    A similar characterization is  given for  more general singular spaces. Indeed, it holds that $H^{s,p}_{\alpha}(\mathbb R^d)=H^{s,p}(\mathbb R^d)$ provided $s<\frac{d}{p}-d+2$, $d=2,3$. As we aim at establishing an existence theory in the energy space, the characterizion can be used only in the two-dimensional  framework. 
\end{remark}

An essential tool in our ideas to establish the local well-posedness theory for \eqref{NLS} will be a  generalized logarithmic Hardy inequality in a critical Sobolev space. Precisely, we will make use of the following. 
\begin{lemma}
Let  $1<a,b<\infty$ such that $1+a-b<0$. Then we have that there exists a constant $C>0$ such that for any $u\in H^1$
\begin{equation}\label{log-hardy}
\int_{|x|<\frac12}\frac{|u(x)|^a}{|x|^2|\log(|x|)|^b}dx\leq C\|u\|^a_{H^1}.
\end{equation}
\end{lemma}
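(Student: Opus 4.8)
The plan is to prove \eqref{log-hardy} by a dyadic decomposition around the origin, exploiting the fact that in dimension two the weight $|x|^{-2}$ is exactly scaling-critical. A naive combination of H\"older's inequality with the Sobolev embedding $H^1\hookrightarrow L^q$ ($q<\infty$) necessarily fails here: matching the non-integrable weight would force the endpoint $q=\infty$, which is unavailable since $H^1(\mathbb{R}^2)\not\hookrightarrow L^\infty$. This is precisely the obstacle that the logarithmic weight $|\log|x||^{-b}$ is meant to defeat, and the core of the argument is to quantify how the size of $b$ compensates for the borderline growth of $u$ near the origin. Concretely, I set $A_k=\{2^{-k-1}\le|x|<2^{-k}\}$ for $k\ge1$, so that $\{0<|x|<1/2\}=\bigsqcup_{k\ge1}A_k$ and on $A_k$ one has $|x|\simeq 2^{-k}$, $|\log|x||\simeq k$, hence $\frac{1}{|x|^2|\log|x||^b}\simeq\frac{2^{2k}}{k^b}$. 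Writing $\bar u_k$ for the average of $u$ over $A_k$ and $g_k=\|\nabla u\|_{L^2(A_k)}$, I split $|u|^a\lesssim|u-\bar u_k|^a+|\bar u_k|^a$, so that the contribution of each annulus is $I_k:=\int_{A_k}\frac{|u|^a}{|x|^2|\log|x||^b}\,dx\lesssim\frac{2^{2k}}{k^b}\int_{A_k}|u|^a\,dx$.

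For the oscillatory part I would invoke the scale-invariant Sobolev--Poincar\'e inequality on the fixed annulus $A_0=\{1/2\le|y|<1\}$, namely $\|v-\bar v\|_{L^a(A_0)}\le C_a\|\nabla v\|_{L^2(A_0)}$ for every finite $a$ (valid in 2D because $H^1$ of a bounded planar Lipschitz domain embeds into all $L^a$, and the mean-zero condition removes the lower-order term). Rescaling by $2^{-k}$, and using that $\|\nabla\cdot\|_{L^2}$ is scaling invariant in two dimensions, this gives $\|u-\bar u_k\|_{L^a(A_k)}\le C\,2^{-2k/a}g_k$, so the oscillatory contribution to $I_k$ is $\lesssim k^{-b}g_k^a$. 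The averages are handled by telescoping: comparing means over adjacent annuli through the Poincar\'e inequality on $B_j:=A_{j-1}\cup A_j$ yields $|\bar u_j-\bar u_{j-1}|\lesssim\|\nabla u\|_{L^2(B_j)}$, and summing with Cauchy--Schwarz produces the growth bound $|\bar u_k|\lesssim\|u\|_{H^1}+\sqrt{k}\,\|\nabla u\|_{L^2}\lesssim\sqrt{k}\,\|u\|_{H^1}$; the factor $\sqrt{k}$ is the discrete counterpart of the $|\log|x||^{1/2}$ growth of the spherical means of a planar $H^1$ function.

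Summing over $k$ concludes the proof. The oscillatory sum $\sum_k k^{-b}g_k^a$ is controlled by $\|\nabla u\|_{L^2}^a$ via $\sum_k g_k^2\le\|\nabla u\|_{L^2}^2$ together with the embedding $\ell^2\hookrightarrow\ell^a$ (for $a\ge2$) or H\"older (for $1<a<2$), and only needs $b>1$. The delicate term is the mean sum $\sum_k k^{-b}|\bar u_k|^a\lesssim\|u\|_{H^1}^a\sum_k k^{a/2-b}$, which converges exactly when $b>1+\tfrac{a}{2}$; this is implied by the hypothesis $1+a-b<0$. The main obstacle is therefore the borderline growth of the averages of $u$ as $x\to0$: on $A_k$ the geometric gain $2^{2k}$ coming from the weight is precisely cancelled by the $L^a$-mass carried by $u$, so that convergence of the series is purchased entirely by the logarithmic factor, and a sharp control of how fast $\bar u_k$ may grow (at most like $k^{1/2}$) is the heart of the matter.
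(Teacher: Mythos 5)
Your proof is correct, and it takes a genuinely different route from the paper: the paper offers no proof of this lemma at all, deferring entirely to Machihara, Ozawa, and Wadade \cite{MOW2013}, where the inequality is established in the more general setting of critical Sobolev--Lorentz spaces via rearrangement and Lorentz-space machinery. Your dyadic argument is elementary and self-contained, and every step checks out: on $A_k=\{2^{-k-1}\le|x|<2^{-k}\}$ the weight is indeed comparable to $2^{2k}k^{-b}$; the rescaled Sobolev--Poincar\'e bound $\|u-\bar u_k\|_{L^a(A_k)}\lesssim 2^{-2k/a}\|\nabla u\|_{L^2(A_k)}$ is legitimate because $\|\nabla \cdot\|_{L^2}$ is scale-invariant in two dimensions and $H^1(A_0)\hookrightarrow L^a(A_0)$ holds for every finite $a$ on the fixed annulus; the telescoping estimate $|\bar u_k|\lesssim\sqrt{k}\,\|u\|_{H^1}$ follows from the Poincar\'e inequality on the doubled annuli $B_j$ (again with scale-invariant constants, by the same rescaling), Cauchy--Schwarz, and the bounded overlap of the $B_j$; and both series converge under the stated hypothesis --- the mean sum needs $b>1+a/2$, which follows from $b>1+a$, while the oscillatory sum needs only $\ell^2\hookrightarrow\ell^a$ when $a\ge2$, or H\"older with the milder condition $b>1-a/2$ when $1<a<2$, so your remark that it ``only needs $b>1$'' is a harmless overstatement. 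Your route buys two things the citation does not: it keeps the argument self-contained at the one point where the paper invokes an external critical-space inequality, and it actually proves more than the lemma asserts, since the binding constraint in your proof is $b>1+a/2$ rather than $b>1+a$; that threshold is sharp for $H^1(\mathbb{R}^2)$, as one sees by testing with $u\sim(\log(1/|x|))^{\beta}$, $\beta<1/2$ --- precisely the $|\log|x||^{1/2}$ borderline growth of means you identified --- and it is consistent with the $q$-dependent conditions in \cite[Theorem 1.1]{MOW2013} specialized to $p=q=2$. What the paper's choice buys instead is generality: the quoted theorem covers all dimensions and the full Sobolev--Lorentz scale, whereas your proof is tailored to $H^1(\mathbb{R}^2)$, which is exactly the case the paper uses.
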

\noindent The proof of the above result is due to Machihara, Ozawa, and Wadade, see \cite{MOW2013}. 
\begin{remark}
The previous Lemma is specifically tailored for our scope, but in its full generality holds true also in the $n$-dimensional Euclidean space when the critical Sobolev space appearing in the right-hand side  of \eqref{log-hardy} is replaced by the critical Sobolev-Lorentz space $H^{p/n}_{p,q}(\mathbb R^n):=(1-\Delta)^{-p/n}L^{p,q}(\mathbb R^n)$, where $L^{p,q}$ is the classical Lorentz space, provided some conditions on the parameters $a,b$ (also depending  on $q$) are satisfied. We refer the reader to \cite[Theorem 1.1]{MOW2013}.
\end{remark}
\begin{remark}
  It is worth mentioning that for an integer $s$, the Sobolov-Lorentz space $H^s_{p,q}$ coincides with the usual Sobolev space $H^s$ provided $p=q=2$. See \cite[Theorem 6.2.3]{BL76}.
\end{remark}

We conclude this subsection by recalling the non-endpoint  Leibniz rule which will be used later on. 
\begin{lemma} Let $1<m_j\leq\infty$ for $j=1,2,3,4$ such that $\frac1r=\frac{1}{m_1}+\frac{1}{m_2}=\frac{1}{m_3}+\frac{1}{m_4}$, with $\frac12<r<\infty$. Then
\begin{equation}\label{eq:leibniz}
\|fg\|_{H^{1,r}}\lesssim \|f\|_{H^{1,m_1}}\|g\|_{L^{m_2}}+\|f\|_{L^{m_3}}\|g\|_{H^{1,m_4}}.
\end{equation}
\end{lemma}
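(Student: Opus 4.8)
The plan is to read \eqref{eq:leibniz} as an instance of the (inhomogeneous) Kato--Ponce fractional Leibniz rule and prove it via the Bony paraproduct decomposition together with a bilinear Coifman--Meyer multiplier theorem. Writing $J=(1-\Delta)^{1/2}$, so that $\|fg\|_{H^{1,r}}=\|J(fg)\|_{L^r}$ and $\|f\|_{H^{1,m_i}}=\|Jf\|_{L^{m_i}}$, the target is the bilinear bound
\[
\|J(fg)\|_{L^r}\lesssim \|Jf\|_{L^{m_1}}\|g\|_{L^{m_2}}+\|f\|_{L^{m_3}}\|Jg\|_{L^{m_4}}.
\]
First I would decompose the product, following Bony, as $fg=T_fg+T_gf+R(f,g)$, where $T_fg=\sum_k S_{k-2}f\,\Delta_k g$ gathers the low-frequency $\times$ high-frequency interactions (the high factor being $g$), $T_gf$ is the symmetric term, and $R(f,g)$ is the resonant high $\times$ high piece, with $\Delta_k,S_k$ the usual Littlewood--Paley projectors. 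Applying $J$ and treating the three pieces separately reduces matters to three bilinear estimates.

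For the paraproduct $T_fg$, on each block the output frequency is comparable to that of $\Delta_k g$, so $J(S_{k-2}f\,\Delta_k g)$ behaves like $S_{k-2}f\cdot(J\Delta_k g)$ up to an error that is itself a bilinear operator with a Coifman--Meyer symbol. Summing in $k$ and invoking the bilinear H\"ormander--Mikhlin (Coifman--Meyer) multiplier theorem then bounds $\|J\,T_fg\|_{L^r}$ by $\|f\|_{L^{m_3}}\|Jg\|_{L^{m_4}}$, and symmetrically $\|J\,T_gf\|_{L^r}\lesssim\|Jf\|_{L^{m_1}}\|g\|_{L^{m_2}}$; these reproduce exactly the two terms on the right-hand side. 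The point to keep track of is that the multiplier theorem must hold in the \emph{entire} stated range, namely $L^{m_1}\times L^{m_2}\to L^r$ with $1<m_i\le\infty$ and $\tfrac12<r<\infty$, and this is precisely the range furnished by the multilinear Calder\'on--Zygmund theory of Grafakos--Torres and Kenig--Stein, which extends Coifman--Meyer below $L^1$.

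The delicate term is the resonant piece $R(f,g)=\sum_k \Delta_k f\,\widetilde\Delta_k g$: here the two factors carry comparable frequency $\sim 2^k$ but their product may be supported at any frequency $\lesssim 2^k$, so $J$ no longer localizes cleanly onto a single factor. I would handle it by distributing the derivative gain $2^k$ onto one factor and recognizing the resulting summed bilinear operator as one whose kernel again satisfies the multilinear Calder\'on--Zygmund estimates, so that the same theory applies. The \textbf{main obstacle} is exactly this step in the \emph{quasi-Banach regime} $r\le 1$ allowed by the hypothesis $r>\tfrac12$: there $L^r$ is not a Banach space, duality and the ordinary $L^r$-boundedness of singular integrals fail, and it is the sharp lower bound $r>\tfrac12$ that keeps the bilinear/vector-valued estimates alive. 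By contrast, when $1<r<\infty$ the proof is elementary: the Mikhlin multiplier theorem gives $\|u\|_{H^{1,r}}\simeq\|u\|_{L^r}+\|\nabla u\|_{L^r}$, and then the ordinary product rule $\nabla(fg)=(\nabla f)g+f(\nabla g)$ combined with H\"older's inequality yields \eqref{eq:leibniz} at once, the paraproduct machinery being needed only to cover the quasi-Banach part of the range. In any case, \eqref{eq:leibniz} is the $s=1$ instance of the sharp fractional Leibniz rule of Grafakos--Oh, which one may simply quote.
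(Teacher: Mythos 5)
Your route coincides with the paper's: the paper does not prove \eqref{eq:leibniz} at all, but simply quotes the fractional Leibniz (Kato--Ponce) theorem of Grafakos--Oh \cite{GO2014} (and Muscalu--Schlag \cite{MuSc} for an alternative approach), and your paraproduct-plus-Coifman--Meyer outline is precisely the argument behind that citation; quoting \cite{GO2014}, as you do in your last sentence, is exactly what the authors do.

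There is, however, one place where your sketch overreaches, and it is exactly the step you yourself flag as the main obstacle. The multilinear Calder\'on--Zygmund theory of Grafakos--Torres and Kenig--Stein does handle the two paraproducts $T_fg$ and $T_gf$ in the whole quasi-Banach range $r>\tfrac12$, because there $J$ can be commuted onto the high-frequency factor through a genuine Coifman--Meyer symbol. But the resonant piece $R(f,g)$ does \emph{not} reduce uniformly to such an operator: the output of $\Delta_k f\,\widetilde\Delta_k g$ lives at frequencies $2^j\lesssim 2^k$, and for $r\le 1$ projecting a function band-limited to radius $2^k$ onto frequency $2^j$ in $L^r$ costs a factor $2^{(k-j)n(1/r-1)}$, against the gain $2^{(j-k)s}$ from putting $J^s$ at frequency $2^j$; the sum over $j\le k$ converges only when $s>n(1/r-1)$. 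This is precisely the hypothesis in \cite{GO2014}: $s>\max\left(0,\tfrac nr-n\right)$ or $s$ an even integer (even $s$ being exceptional since $(1-\Delta)^{s/2}$ is then a local differential operator), and Grafakos--Oh also give counterexamples showing a condition of this type is necessary. Here $s=1$, $n=2$, so the theorem furnishes $r>\tfrac23$, not $r>\tfrac12$; your assertion that the resonant term is again a multilinear Calder\'on--Zygmund operator in the entire stated range is therefore not justified, and in fact the lemma as printed (down to $r>\tfrac12$) slightly overstates the cited result as well. This is harmless for the paper, which only applies the lemma with $r=2-\epsilon\in(1,2)$. A smaller point: your ``elementary'' argument for $1<r<\infty$ uses $\|\nabla f\|_{L^{m_1}}\lesssim\|f\|_{H^{1,m_1}}$, i.e.\ Riesz-transform boundedness, which fails at the admissible endpoint $m_1=\infty$ (likewise $m_4=\infty$), so even in the Banach range the $L^\infty$ endpoints require the paraproduct (or commutator) machinery rather than H\"older alone.
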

\noindent A proof of the Lemma above can be found in \cite{GO2014}, or in the monograph \cite{MuSc} with a different approach.

\subsection{Ground states} In this subsection we recall an existence result for the ground states associated to \eqref{NLS}, and a few  properties  of the Pohozaev functional. In what follows, we consider the focusing equation, namely the nonlinearity in the right-hand side of \eqref{NLS} is $-|u|^{p-1}u$. \medskip

Let us consider the action functional 
\begin{equation}\label{def:action}
S_\omega(u)=E(u)+\frac{\omega}{2}M(u),
\end{equation}
where $E$ and $M$ are energy and the mass of a function $u\in H^1_{\alpha}$ as defined in \eqref{eq:energy} and \eqref{eq:mass}, respectively. We recall that a standing wave is a solution of \eqref{NLS} of the form $u(t,x)=e^{i\omega t}v(x)$, hence $v=v(x)$ satisfies  the  stationary equation 
\begin{equation}\label{eq:stationary}
\Delta_{\alpha}v+\omega v-|v|^{p-1}v=0.
\end{equation}
The set of ground states is defined instead as the set of solution to \eqref{eq:stationary} minimizing the action functional $S_\omega$, i.e.
\begin{equation}
G=\{v_\omega \in H^1_\alpha \hbox{ s.t. } S_\omega(v_\omega)\leq S_\omega(v) \hbox{ for any } v\in H^1_\alpha \hbox{ and } v_\omega \hbox{ satisfies } \eqref{eq:stationary}\}.
\end{equation}
Existence of solution to \eqref{eq:stationary} was proved in \cite{FGI22}, via minimization of the functional  $S_\omega$ constrained to the Nehari manifold 
\[
N_\omega(u):= \{u \in H^1_\alpha  \, \hbox{ s.t. } \,  F (u)+\omega\|u\|_{L^2}^2-\|u\|^{p+1}_{L^{p+1}}=0\}.
\]
Specifically, it was proved in \cite{FGI22} that for any $\omega>|e_\alpha|$ a ground state $v_\omega$ exists,  it minimizes the following variational problem:
\[
\mathfrak i_\omega=\inf_{\{0\neq v\in  N_\omega \}} S_\omega(v),
\]
and in particular $\mathfrak i_\omega=S_\omega(v_\omega)$. We also refer to the work of Adami, Boni, Carlone, and Tentarelli \cite{ABCT}.\smallskip

Via standard variational argument, see \cite{FN2023}, we have that the set of initial data $u_0\in H^1_{\alpha}$ satisfying $E(u_0)\geq0$, $S_{\omega}(u_0)<S_{\omega}(v_{\omega})$, and $P(u_0)<0$, where
\[
P(u)=F(u)-\frac{p-1}{p+1}\|u\|_{L^{p+1}}^{p+1}+\frac{|q|^2}{4\pi}, \quad u\in H^1_{\alpha},
\]
is the Pohozaev functional, is invariant along the flow of \eqref{NLS}. More precisely, we have the following.
\begin{prop}\label{prop:P-neg}
Let $u_0\in H^1_{\alpha}$ an initial datum fulfilling    $S_{\omega}(u_0)<S_{\omega}(v_{\omega})$, $E(u_0)\geq0$,  and $P(u_0)<0$. Then the solution $u(t) $ to the Cauchy problem \eqref{NLS},  preserves the same bounds for any $t\in(-T_{\min}, T_{\max})$, and in particular
\begin{equation}\label{P-bound}
P(u(t))< 2(S_{\omega}(u_0)-S_{\omega}(v_{\omega}))<0.
\end{equation}
\end{prop}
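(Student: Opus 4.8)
The plan is to run the classical invariant-set argument (in the spirit of Payne--Sattinger and of \cite{OT91,HR2007}), adapted to the point interaction, reducing everything to the behaviour of the single functional $P$ along the flow. Since the mass $M$ and the energy $E$ are conserved by Theorem~\ref{thm-lwp}\,(ii), the action $S_\omega(u)=E(u)+\tfrac\omega2 M(u)$ is conserved as well; hence the two conditions $S_\omega(u(t))=S_\omega(u_0)<S_\omega(v_\omega)$ and $E(u(t))=E(u_0)\ge 0$ are automatically propagated for every $t\in(-T_{\min},T_{\max})$. Thus the only nontrivial point is to show that $P(u(t))$ stays negative, together with the quantitative bound \eqref{P-bound}.

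The heart of the matter is the variational inequality
\begin{equation}
S_\omega(u)-\tfrac12 P(u)\ge S_\omega(v_\omega)=\mathfrak i_\omega \qquad\text{for every } u\in H^1_\alpha\setminus\{0\} \text{ with } P(u)\le 0,
\end{equation}
which is equivalent to $P(u)\le 2\big(S_\omega(u)-S_\omega(v_\omega)\big)$. To prove it I would introduce a dilation $u\mapsto u^{(\mu)}$ adapted to the structure $u=\phi^\lambda+q\mathbb G^\lambda$ and compute the induced scaling laws of $F$, of $\|u\|_{L^{p+1}}^{p+1}$ and of the boundary term $\tfrac{|q|^2}{4\pi}$; the fact that $\mathbb G^\lambda$ carries the logarithmic profile $-\tfrac{1}{2\pi}\log|x|$, which under a two-dimensional dilation shifts by an additive constant, is precisely what produces the extra term $\tfrac{|q|^2}{4\pi}$ in $P$ and makes $P$ the generator of dilations (consistently with the virial identity $\tfrac{d^2}{dt^2}\int|x|^2|u|^2=8P(u)$). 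With the scaling chosen so that $\tfrac{d}{d\mu}S_\omega(u^{(\mu)})=\mu^{-1}P(u^{(\mu)})$, the map $\mu\mapsto S_\omega(u^{(\mu)})$ has a unique critical point, a strict maximum, at the value $\mu_0$ where $P(u^{(\mu_0)})=0$; since $3<p\le5$, the assumption $P(u)<0$ forces $\mu_0<1$. At $\mu_0$ the rescaled function lies on the Pohozaev set $\{P=0\}$, on which $S_\omega\ge\mathfrak i_\omega$ by the variational characterization of the ground state (using $N_\omega(v_\omega)=P(v_\omega)=0$, see \cite{FGI22,FN2023}). Finally, the auxiliary functional $Q(u):=S_\omega(u)-\tfrac12 P(u)=\tfrac\omega2\|u\|_{L^2}^2+\tfrac{p-3}{2(p+1)}\|u\|_{L^{p+1}}^{p+1}-\tfrac{|q|^2}{8\pi}$ is seen, from the same scaling computation, to be strictly increasing along $\mu\mapsto u^{(\mu)}$, whence $Q(u)>Q(u^{(\mu_0)})=S_\omega(u^{(\mu_0)})\ge\mathfrak i_\omega$ because $\mu_0<1$; this gives the strict inequality.

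With the variational inequality in hand, the conclusion follows by a continuity/connectedness argument. Since $u_0\neq0$ (otherwise $P(u_0)=0$), mass conservation gives $u(t)\neq0$ for all $t$, and $t\mapsto P(u(t))$ is continuous because the flow is continuous into $H^1_\alpha$ (Theorem~\ref{thm-lwp}\,(i),(v)) and $P$ is continuous on $H^1_\alpha$. On the connected interval $(-T_{\min},T_{\max})$, if $P(u(t))$ were ever to reach a nonnegative value there would be a nearest time $t_0$ with $P(u(t_0))=0$; evaluating the variational inequality at $u(t_0)$ would then give $0=P(u(t_0))\le 2\big(S_\omega(u_0)-S_\omega(v_\omega)\big)<0$, a contradiction. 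Hence $P(u(t))<0$ throughout, and feeding $P(u(t))<0$ back into the strict variational inequality together with $S_\omega(u(t))=S_\omega(u_0)$ yields exactly \eqref{P-bound}.

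The main obstacle is the variational inequality, and specifically the construction of the dilation and the explicit computation of its scaling laws in the presence of the point interaction: the singular profile $\mathbb G^\lambda$ is not scale invariant, so neither $F$ nor the term $\tfrac{|q|^2}{4\pi}$ transforms as in the free case, and one must verify both that $P$ is genuinely the first variation of $S_\omega$ under the chosen scaling and that $Q$ is monotone. This is exactly the step that is simpler for finite-variance data in $\Sigma$ and for the free Laplacian; alternatively one may import the variational inequality directly from \cite{FN2023}, after which the conservation and continuity arguments above are routine.
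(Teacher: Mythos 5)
Your outer skeleton is the intended one, and is in fact \emph{more} detailed than what the paper provides: the paper's entire ``proof'' of Proposition \ref{prop:P-neg} is the remark that the bounds on $S_\omega$ and $E$ follow from conservation of mass and energy, while the bound on $P$ ``follows by standard variational analysis'' imported from \cite{FN2023}. Your reduction to the variational inequality $S_\omega(u)-\tfrac12 P(u)\ge \mathfrak i_\omega$ on $\{P\le 0,\ u\neq 0\}$, your computation $Q(u)=S_\omega(u)-\tfrac12 P(u)=\tfrac\omega2 M(u)+\tfrac{p-3}{2(p+1)}\|u\|_{L^{p+1}}^{p+1}-\tfrac{|q|^2}{8\pi}$, and the continuity/first-time-$t_0$ argument (including the observation that $u(t)\neq 0$ by mass conservation) are all correct and are exactly the ``standard variational analysis'' the paper alludes to.

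However, your self-contained derivation of the variational inequality contains a genuine gap: the fiber analysis fails in the presence of the point interaction. Under $u^{(\mu)}(x)=\mu u(\mu x)$ one has $\mathbb{G}^\lambda(\mu x)=\mathbb{G}^{\lambda\mu^2}(x)$, so the charge scales as $q\mapsto \mu q$, and since $\Gamma^{\lambda\mu^2}_\alpha=\Gamma^\lambda_\alpha+\tfrac{\log\mu}{2\pi}$ while $F$ is independent of the spectral parameter, one computes
\[
F(u^{(\mu)})=\mu^2 F(u)+\frac{\mu^2\log\mu}{2\pi}|q|^2 .
\]
Your identity $\frac{d}{d\mu}S_\omega(u^{(\mu)})=\mu^{-1}P(u^{(\mu)})$ does hold (this is precisely where the $\tfrac{|q|^2}{4\pi}$ term in $P$ comes from), but it gives
\[
\frac1\mu\frac{d}{d\mu}S_\omega(u^{(\mu)})=F(u)+\frac{|q|^2}{4\pi}+\frac{|q|^2}{2\pi}\log\mu-\frac{(p-1)\mu^{p-3}}{p+1}\|u\|_{L^{p+1}}^{p+1},
\]
which, when $q\neq0$, tends to $-\infty$ both as $\mu\to 0^+$ (logarithm) and as $\mu\to\infty$ (since $p>3$). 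Hence $\mu\mapsto S_\omega(u^{(\mu)})$ is \emph{decreasing} near $\mu=0$; it has either no critical points or two (a minimum followed by a maximum), not ``a unique critical point, a strict maximum''. Consequently a $\mu_0$ with $P(u^{(\mu_0)})=0$ need not exist on the fiber at all, and when it does, $P(u)<0$ does not force $\mu_0<1$. The monotonicity of $Q$ fails for the same reason: $Q(u^{(\mu)})=\tfrac\omega2 M(u)+\tfrac{p-3}{2(p+1)}\mu^{p-1}\|u\|_{L^{p+1}}^{p+1}-\tfrac{\mu^2|q|^2}{8\pi}$ has derivative $\tfrac{(p-1)(p-3)}{2(p+1)}\mu^{p-2}\|u\|_{L^{p+1}}^{p+1}-\tfrac{\mu|q|^2}{4\pi}$, which is negative for small $\mu$ whenever $q\neq0$ (as $p-2>1$). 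A further unproved step is the assertion $S_\omega\ge\mathfrak i_\omega$ on $\{P=0,\,u\neq0\}$: knowing $N_\omega(v_\omega)=P(v_\omega)=0$ does not identify the Nehari level with the minimum over the Pohozaev manifold; that identification is itself a nontrivial lemma. Your stated fallback---importing the variational inequality from \cite{FN2023}---is exactly what the paper does, and with that substitution your argument is complete; without it, the scaling route as you wrote it breaks precisely because of the $q\neq0$ terms that distinguish this problem from the free Laplacian.
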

\begin{remark}
The bound involving $S_\omega$ and the one on $E$ are straightforward by conservation of mass and energy, while the one $P$ follows by standard variational analysis.  
\end{remark}

\section{LWP theory}\label{sec:LWP}
We prove here the local existence result \emph{\`a la Kato} for \eqref{NLS}. As discussed earlier, a local existence theory for \eqref{NLS} was given in \cite{FGI22} by implementing the abstract theory of \cite{OSY12}. For the reasons explained in the Introduction, we give a direct proof using the Kato theory. \medskip

In the following we denote $g(u)=\pm|u|^{p-1}u$. From \cite{Cazenave} we can claim that in order to apply the Kato's theory, it is enough to prove that for some $2\leq r, \rho<\infty$ 
\begin{equation}\label{kato1}
\|g(u)-g(v)\|_{L^{r'}}\leq C(M)\|u-v\|_{L^\rho}
\end{equation}
for all $u,v\in H^1_{\alpha}$ such that $\|u\|_{H^1_{\alpha}},\|v\|_{H^1_{\alpha}}\leq M$ and that 
\begin{equation}\label{kato2}
\|g(u)\|_{H^{1,r'}_{\alpha}}\leq C(M)(1+\|u\|_{H^{1,\rho}_{\alpha}})
\end{equation}
for all $u\in H^1_{\alpha}\cap H^{1,r}_{\alpha}$ such that $\|u\|_{H^1_{\alpha}}\leq M$.
\\

Recall the estimate $||u|^{p-1}u-|v|^{p-1}v|\lesssim |u-v|(|u|^{p-1}+|v|^{p-1})$. Take $r'=2-\epsilon$, $\epsilon\in(0,1)$ then by the H\"older inequality with $\frac{1}{\tilde q}+\frac12=\frac{1}{2-\epsilon}$ and the Sobolev \eqref{Sob-emb}, we have
\begin{equation}
\begin{aligned}
\||u|^{p-1}u-|v|^{p-1}v\|_{L^{2-\epsilon}}&\lesssim \|u-v\|_{L^2}(\|u\|^{p-1}_{L^{(p-1)\tilde q}}+\|v\|^{p-1}_{L^{(p-1)\tilde q}})\\
&\lesssim\|u-v\|_{L^2}(\|u\|^{p-1}_{H^1_{\alpha}}+\|v\|^{p-1}_{H^1_{\alpha}}).
\end{aligned}
\end{equation}
Therefore, we have that \eqref{kato1} is verified.\\

We have from Proposition \ref{prop-space-equiv} that, for $\sigma<2$, $\|f\|_{H^{1,\sigma}_\alpha}\sim\|f\|_{H^{1,\sigma}}$. Hence our first idea is to reduce our task to estimate the $H^{1,\sigma}$-norms for $\sigma$'s less than 2.  \\
As before, take $r'=2-\epsilon$, where $\epsilon\in(0,1)$.  Let us rewrite 
\begin{equation}\label{eq:deco-u}
|u|^{p-1}u=\psi |u|^{p-1}u+(1-\psi)|u|^{p-1}u
\end{equation}
where $\psi\in C^\infty_c$ such that $\psi=1$ in a neighbourhood  of $x=0$ and its support is contained in the ball $|x|\leq1/2$. Note that if $u\in H^1_{\alpha}$ then $(1-\psi)u\in H^1$ since the singularity at the origin is removed.  \medskip

Let us focus on the first term $\psi |u|^{p-1}u$. Consider a further cut-off function $\psi_1$ such that $\psi_1=1$ on the support of $\psi$. Then, $\psi|u|^{p-1}u=\psi|\psi_1u|^{p-1}(\psi_1u)$. Since now on along  this section, we omit the $\lambda$-dependence of $\phi^\lambda$ and $\mathbb G^{\lambda}$.  Hence, in a neighbourhood of the origin 
\begin{equation}\label{loc-u-origin}
\begin{aligned}
\psi |u|^{p-1}u&=\psi(\psi_1\phi+q\psi_1 \mathbb G)|\psi_1\phi+q\psi_1 \mathbb G|^{p-1}\\
&\sim\psi(\psi_1\phi+q\psi_1 \log|x|)|\psi_1\phi+q\psi_1 \log|x||^{p-1}\\
&=\psi(\psi_1\phi-q\psi_1 \log(|x|^{-1}))|\psi_1\phi-q\psi_1 \log(|x|^{-1})|^{p-1}\\
&=\psi\log^p(|x|^{-1})\left(\frac{\psi_1\phi}{\log(|x|^{-1})}-q\psi_1 \right)\left|\frac{\psi_1\phi}{\log(|x|^{-1})}-q\psi_1 \right|^{p-1}.
\end{aligned}
\end{equation}
 In the last step, we eliminated the $\log(|x|)$-type singularity from the localized non-regular term $\mathbb G$ by placing it as a factor of the regular term $\psi_1\phi$. Later, we will demonstrate how to manage this new term with $\log(|x|)$ incorporated into the regular part of the solution.\\

The function $h:=\frac{\psi_1\phi}{\log(|x|^{-1})}$ is clearly in $L^2$ as it is supported in $|x|<1/2$ and in that ball $\frac{1}{\log(|x|^{-1})}\leq \frac{1}{\log2}$. We claim that actually $h\in H^1$. Straightforward calculations give
\[
\nabla h=-\frac{\psi_1\phi }{|x|^2\log^2(|x|^{-1})}x+\frac{1}{\log(|x|^{-1})}
\nabla(\psi_1\phi),
\]
then
\[
|\nabla h|\lesssim \frac{|\psi_1\phi|}{|x|\log^2(|x|^{-1})}+\frac{|\nabla(\psi_1\phi)|}{\log(|x|^{-1})}.
\]

\noindent At this point we invoke the logarithmic Hardy
inequality \eqref{log-hardy} with $a=2$, $b=4$, so that 
\[
\int_{|x|<\frac12}\frac{|\psi_1\phi|^2}{|x|^2\log^{4}(|x|^{-1})}dx=\int_{|x|<\frac12}\frac{|\phi|^2}{|x|^2\log^{4}(|x|^{-1})}dx\lesssim \|\phi\|_{H^1}^2.
\]
\noindent Using the  boundedness of $\frac{1}{\log(|x|^{-1})}$ and the properties of $\psi_1$, we control the second term 
\[
\int_{|x|<\frac12}\frac{|\nabla(\psi_1\phi)|^2}{\log^{2}(|x|^{-1})}dx\lesssim\int_{|x|<\frac12}|\nabla(\psi_1\phi)|^2 dx\lesssim\|\phi\|^2_{H^1}.
\]
\\
\noindent At this point, we introduce the function 
\[\tilde u=\frac{\psi_1\phi}{\log(|x|^{-1})}-q\psi_1.
\]
By means of the Leibniz rule \eqref{eq:leibniz}, we estimate \eqref{loc-u-origin} as follows:
\begin{align}\label{kato-staff1}
\|\psi\log^p(|x|^{-1})\tilde u |\tilde u|^{p-1}\|_{H^{1,r}}&\lesssim \| (\psi\log^p(|x|^{-1}))\|_{H^{1,m_1}}\|\tilde u\|_{L^{pm_2}}^{p}\\\label{kato-staff2}
&+\| \psi\log^p(|x|^{-1})\|_{L^{m_3}}\| (\tilde u|\tilde u|^{p-1})\|_{H^{1, m_4}},
\end{align}
where $r=2-\epsilon$, $\epsilon\in(0,1)$, and
\[
\frac{1}{2-\epsilon}=\frac{1}{r}=\frac{1}{m_1}+\frac{1}{m_2}=\frac{1}{m_3}+\frac{1}{m_4}.
\]\\
\noindent Let  $m_1=2-\frac{\epsilon}{2}$ and  $m_2=\frac{(4-\epsilon)(2-\epsilon)}{\epsilon} $. We have that $\psi \log^p(|x|^{-1})\in L^{m_1}$ and $\nabla \psi \log^p(|x|^{-1})\in L^{m_1}$, indeed $\psi$ and  $\nabla \psi$ are  supported in the ball $|x|<1/2$, and $\log^p(|x|^{-1})\lesssim |x|^{-\beta}$ for any $\beta>0$ in that ball. In particular it holds for $\beta<(4-\epsilon)/\epsilon$ which gives the desired integrability. Moreover, basic calculus gives    $\displaystyle \psi\nabla\log^p(|x|^{-1})=-p\psi\log^{p-1}(|x|^{-1})\frac{x}{|x|^2}$,
then 
\[
|\psi\nabla\log^p(|x|^{-1})|\lesssim \psi\frac{1}{|x|}\log^{p-1}(|x|^{-1}),
\]
which is in $L^{m_1}$ as well for the same consideration as above. Since $\tilde u\in H^1$  and by the Sobolev embedding $H^1\subset L^{\rho}$ for any $\rho\geq 2$, then $\tilde u \in L^{m_2p}$ for  $m_2p<\infty$. Hence, the right-hand side of \eqref{kato-staff1} is estimated by 
\begin{equation}\label{kato-staff5}
\| \psi\log^p(|x|^{-1})\|_{H^{1,m_1}}\|\tilde u\|_{L^{pm_2}}^{p} \lesssim \|\tilde u\|^p_{H^1}.
\end{equation}
As above, take $m_4=2-\frac{\epsilon}{2}$ and  $m_3=\frac{(4-\epsilon)(2-\epsilon)}{\epsilon} $. By the Kato-Staffilani estimate, \cite{Kato95, Staffilani95}, and the Sobolev embedding, we have
\[
\|\nabla (\tilde u|\tilde u|^{p-1})\|_{L^{m_4}}\lesssim \|\tilde u\|^{p-1}_{L^{2(p-1)(4-\epsilon)/\epsilon}}\|\tilde u\|_{H^1}\lesssim \|\tilde u\|^p_{H^1}.
\]
Note that $\tilde u$ is in $H^1$  since  $\tilde u=h-q\psi_1$ (by definition) and by the previous discussion both $h$ and $\psi_1$  belong to $ H^1$. The $L^{m_3}$-norm of $\psi\log^p(|x|^{-1})$ is finite, then for \eqref{kato-staff2} we have 
\begin{equation}\label{kato-staff6}
\| \psi\log^p(|x|^{-1})\|_{L^{m_3}}\| (\tilde u|\tilde u|^{p-1})\|_{H^{1, m_4}}\lesssim \|\tilde u\|^p_{H^1}.
\end{equation}

\noindent The observation now is that 
\begin{equation}\label{kato-staff3}
\|\tilde u\|_{H^1}\lesssim \|\phi\|_{H^1}+|q|\sim \|u\|_{H^1_{\alpha}}.
\end{equation}
Therefore \eqref{kato-staff5} and \eqref{kato-staff6} jointly with \eqref{kato-staff3} along with the definition \eqref{loc-u-origin} yield
\[
\|\psi|u|^{p-1}u\|_{H^{1,r}}\lesssim\|\psi\log^p(|x|^{-1})\tilde u |\tilde u|^{p-1}\|_{H^{1,r}}\lesssim \|u\|^p_{H^1_{\alpha}}.
\]
Note that the constants hidden in the notation $\lesssim$ and $\sim$ only depends on $\psi_1$. \\

At this point we move to the estimate of the remaining term in \eqref{eq:deco-u}, namely $(1-\psi)|u|^{p-1}u$. Observe that away from the origin the function $\mathbb{G}$ is regular and decays at infinity faster than exponentially, see \eqref{eq.asymptotic zero}, hence
\[
||u|^{p-1}u|= |\phi+q \mathbb{G}|^{p}\lesssim |\phi|^p+|\mathbb{G}|^p\in L^{2-\epsilon}.
\]
Moreover, as $|\nabla(|u|^{p-1}u)|\lesssim |u|^{p-1}|\nabla u|,$ by the H\"older inequality
\[
\||u|^{p-1}|\nabla u|\|_{L^{2-\epsilon}}^{2-\epsilon}\leq\|\nabla u\|_{L^2}^{2-\epsilon}\|u\|^{(p-1)(2-\epsilon)}_{L^{2(p-1)(2-\epsilon)\epsilon}}\lesssim \|u\|^{p(2-\epsilon)}_{H^1},
\]
hence also $|u|^{p-1}\nabla u\in L^{2-\epsilon}$. 
We then conclude with the fact that $(1-\psi)|u|^{p-1}u\in H_{\alpha}^{1,r}$, $r=2-\epsilon$.
\begin{proof}[Proof of Theorem \ref{thm-lwp}] The proof of estimates \eqref{kato1} and \eqref{kato2} gives Theorem \ref{thm-lwp} along the same lines of the theory developed in the Cazenave's monograph \cite[Chapter 4.4]{Cazenave}.
\end{proof}

\section{Blow-up}\label{Sec-blowup}

We move now to the proof of the dynamical results of radial solutions. The approach is based on virial estimates. Comparing to the classical case in the presence of the unperturbed Laplacian,  virial identities for \eqref{NLS} must take account of the singular nature of the operator $\Delta_{\alpha}$, hence we cannot straightforwardly proceed as for the equation with a free Laplacian.  To prepare the rigorous proof of the virial identity we note that we crucially have from \eqref{def-G} the following identity which holds in distributional sense   for any regular function $\eta$ with $\eta(0)=0$:
\begin{equation}\label{id-distr}
\eta (\lambda - \Delta) \mathbb{G}^\lambda =0.
\end{equation}
\begin{lemma}\label{ble5}
If $\eta$ a smooth cut-off function which is $|x|^2$ for $x$ near the origin.
Then for any $u \in H^2_\alpha$
we have:\vspace{2mm}
    
\noindent   \textit{(i)} $\eta u \in H^2$; \medskip
     
\noindent     \textit{(ii)} $\Delta_{\alpha} (\eta u) = -\Delta (\eta u)$.
\end{lemma}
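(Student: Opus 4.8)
The plan is to exploit the explicit structure $u=\phi^\lambda+q\mathbb{G}^\lambda$ of an element of $H^2_\alpha$ (see \eqref{def-H2-alpha}) and to show that the cut-off $\eta$, which vanishes to second order at the origin, is strong enough to cancel the logarithmic singularity carried by $\mathbb{G}^\lambda$. Writing $\eta u=\eta\phi^\lambda+q\,\eta\mathbb{G}^\lambda$, the term $\eta\phi^\lambda$ lies in $H^2$ because $\eta$ is smooth with bounded derivatives and $\phi^\lambda\in H^2$, so multiplication preserves $H^2$. The entire content of part \textit{(i)} is therefore the regularity of the singular piece $\eta\mathbb{G}^\lambda$.

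For that term I would use the asymptotic expansion \eqref{eq.asymptotic zero}, according to which $\mathbb{G}^\lambda(x)=-\frac{1}{2\pi}\log|x|+(\text{bounded})$ near the origin, while away from the origin $\mathbb{G}^\lambda$ is smooth and, together with the cut-off, harmless. Near $x=0$ the leading contribution is thus $\eta\mathbb{G}^\lambda\sim-\frac{1}{2\pi}|x|^2\log|x|$, and the key computation is that $|x|^2\log|x|\in H^2$. A direct differentiation gives $\nabla(|x|^2\log|x|)=x(2\log|x|+1)$, which is continuous and vanishes at the origin, and a Hessian whose entries are $\delta_{ij}(2\log|x|+1)+2x_ix_j/|x|^2$; the only unbounded contribution is the $\log|x|$ term, which is square-integrable near the origin in dimension two since $\int_{|x|<1}|\log|x||^2\,dx<\infty$. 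Hence $\eta\mathbb{G}^\lambda\in H^2$ and $\eta u\in H^2$, proving \textit{(i)}. This local computation is the only genuinely technical point, and it is exactly here that the choice $\eta=|x|^2$ near $0$ (vanishing of both $\eta$ and $\nabla\eta$) is essential: a merely first-order vanishing cut-off would leave a $1/|x|$-type second derivative, which is not in $L^2(\mathbb{R}^2)$.

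For part \textit{(ii)}, once $\eta u\in H^2$ is known, I would argue that $\eta u$ carries no singular charge. By the two-dimensional Sobolev embedding $H^2\hookrightarrow C^0$, the point value $(\eta u)(0)$ is well defined, and since near the origin $\eta u=|x|^2u$ with $|x|^2\phi^\lambda(x)\to 0$ and $|x|^2\mathbb{G}^\lambda(x)\sim-\frac{1}{2\pi}|x|^2\log|x|\to 0$, we obtain $(\eta u)(0)=0$. Consequently the representation $\eta u=(\eta u)+0\cdot\mathbb{G}^\lambda$ is the admissible decomposition of $\eta u$ as an element of $H^2_\alpha$ in the sense of \eqref{def-H2-alpha}, with regular part $\eta u$ and charge $q=(\eta u)(0)/\Gamma^\lambda_\alpha=0$; it is the only one, by the uniqueness of the representation recalled above. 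Feeding this into the definition \eqref{def-equiv} of the action of $\Delta_\alpha$ yields $\Delta_\alpha(\eta u)=-\Delta(\eta u)-\lambda\cdot 0\cdot\mathbb{G}^\lambda=-\Delta(\eta u)$, which is \textit{(ii)}. Equivalently, this can be read off from the distributional identity \eqref{id-distr}: since $(\lambda-\Delta)\mathbb{G}^\lambda=\delta$ and $\eta(0)=0$, the Dirac mass produced by differentiating the singular part is annihilated, leaving a purely regular Laplacian.
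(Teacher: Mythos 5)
Your proof is correct and follows essentially the same route as the paper: you reduce \textit{(i)} to showing $\eta\,\mathbb{G}^\lambda\in H^2$ via the local model $|x|^2\log|x|$ (with the same derivative bounds, $|\nabla h|\lesssim |x|\,(|\log|x||+1)$ and $|\nabla^2 h|\lesssim |\log|x||\in L^2_{\loc}$ in 2D), and then obtain \textit{(ii)} from the definition \eqref{def-equiv} because $\eta u$ consists of a purely regular part. If anything, your explicit check that $(\eta u)(0)=0$ --- so that the charge $q=(\eta u)(0)/\Gamma^\lambda_\alpha$ vanishes and $\eta u$ genuinely satisfies the boundary condition in \eqref{def-H2-alpha} --- makes precise a step the paper's proof leaves implicit.
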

\begin{proof}
Let us suppose that $i)$ holds true. Then by definition of $H^2_\alpha$ as in \eqref{def-H2-alpha}, along  with the decomposition into the regular and irregular part of $u$ as in \eqref{def-equiv}, we have that 
\[
(\lambda-\Delta_{\alpha})(\eta u)=(\lambda-\Delta)(\eta u),
\]
indeed $\eta u$ only consists of a regular part. The thesis follows.  
Let us now prove that $i)$ is verifies. It is enough to show that the multiplication by $\eta$ regularizes $\mathbb{G}^\lambda$, namely $
\eta \mathbb{G}^\lambda \in H^2.$ By \eqref{eq.asymptotic zero}, the singularity of $\mathbb{G}^\lambda$ is of the type $\log|x|$ as $x\to0$, so $\eta \mathbb{G}^\lambda\sim |x|^2\log|x|:=h(x)$ as $|x|\to0$.  
Therefore, straightforward calculations give that in a neighbourhood of $x=0$ the function $h$ is clearly in $L^2$; moreover $|\nabla h|\lesssim |x|(\log|x|+1)$ which is in $L^2$, and $|\nabla^2h|\lesssim \log{(|x|^{-1})}$ which is in $L^2_{\loc}$ as well. The proof is concluded.
\end{proof}

With the regularity Lemma above, we can state the virial identities used along the paper. 
Consider the virial functional 
\begin{equation}\label{vir}
V(t)=\int \eta(x)|u(t)|^2dx
\end{equation}
{which is well defined provided $\eta$ is a compactly supported smooth cut-off function}. In what follows, we omit the time-dependence of $u$, $\phi^\lambda$, and $q$ when no confusion may arise. 
\begin{prop} Let $u\in H^1_\alpha$ be a radial solution to \eqref{NLS}, and $\eta$ a smooth cut-off function  having compact support and which is $\frac{|x|^2}{2}$ for $x$ in a neighbourhood of the origin. Then we have 
    \begin{equation}\label{vir-prime}
    \frac{d}{dt}V(t)=2 \mathrm{Im} \int\nabla\eta\cdot\nabla u\bar udx
\end{equation}
and 
\begin{equation}\label{vir-second}
    \frac{d^2}{dt^2}V(t)=4P(u(t))+\mathcal R(u(t)),
\end{equation}
where $\mathcal R(u(t))$ is a remainder term defined  in \eqref{def-remainder} below.
\end{prop}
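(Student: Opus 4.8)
The plan is to prove \eqref{vir-prime} first and then differentiate it once more in time to reach \eqref{vir-second}, establishing both identities initially for the regular class $u\in H^2_\alpha$ --- where Lemma \ref{ble5} guarantees that $\eta u\in H^2$ and legitimizes every differentiation under the integral sign and every integration by parts --- and only afterwards extending them to energy solutions by a density argument combined with the continuous dependence in Theorem \ref{thm-lwp}(v).

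For \eqref{vir-prime}, I would differentiate \eqref{vir} to get $\frac{d}{dt}V(t)=2\,\mathrm{Re}\int\eta\,\bar u\,\partial_t u\,dx$ and substitute $\partial_t u=-i(\Delta_\alpha u\pm|u|^{p-1}u)$. Since $\int\eta\,\bar u\,(\pm|u|^{p-1}u)\,dx=\pm\int\eta\,|u|^{p+1}\,dx\in\mathbb R$, the nonlinear contribution is purely imaginary after the factor $-i$ and disappears under $\mathrm{Re}$, leaving $\frac{d}{dt}V(t)=2\,\mathrm{Im}\int\eta\,\bar u\,\Delta_\alpha u\,dx$. By \eqref{def-G}--\eqref{def-equiv} one has $\Delta_\alpha u=-\Delta u-q\delta$ in the distributional sense, so integrating by parts and testing the Dirac mass against $\eta u$ contributes a boundary-type term proportional to $(\eta u)(0)$, which vanishes because $\eta(0)=0$ (here the quadratic vanishing of $\eta$ is essential, and also makes all the products integrable despite $\nabla u\notin L^2_{\loc}$); the surviving free-type integral equals $\int\nabla\eta\cdot\nabla u\,\bar u\,dx+\int\eta\,|\nabla u|^2\,dx$, whose second summand is real and is annihilated by $\mathrm{Im}$. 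This gives \eqref{vir-prime}.

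For \eqref{vir-second}, I would differentiate \eqref{vir-prime} and insert the equation once more, so that $\frac{d^2}{dt^2}V(t)=2\,\mathrm{Im}\int\nabla\eta\cdot(\nabla\partial_t u\,\bar u+\nabla u\,\partial_t\bar u)\,dx$ splits into a part carrying $\Delta_\alpha$ and a part carrying the nonlinearity. Repeated integration by parts turns the nonlinear part into a multiple of $\int\Delta\eta\,|u|^{p+1}\,dx$ and the linear part into $\int\langle\nabla^2\eta\,\nabla u,\nabla u\rangle\,dx$ plus lower-order pieces; near the origin $\eta=\tfrac{|x|^2}{2}$ forces $\nabla^2\eta=\mathrm{Id}$ and $\Delta\eta=2$, which reproduce exactly the quadratic form $F(u)$ and the term $-\frac{p-1}{p+1}\|u\|_{L^{p+1}}^{p+1}$ of the Pohozaev functional, while every contribution from the region where $\eta$ departs from $\tfrac{|x|^2}{2}$ is collected into the remainder $\mathcal R(u(t))$.

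The main obstacle is the singular part $q\mathbb G$ inside the linear term. Formally $\int\langle\nabla^2\eta\,\nabla u,\nabla u\rangle\,dx$ contains $\int|\nabla u|^2\,dx$ near the origin, but since $\nabla u$ behaves like $q\nabla\mathbb G\sim-\frac{q}{2\pi}\frac{x}{|x|^2}$ by \eqref{eq.asymptotic zero}, the integrand $|\nabla u|^2\sim\frac{|q|^2}{4\pi^2|x|^2}$ is not locally integrable and the naive free-Laplacian computation diverges logarithmically. The resolution is to keep the self-adjoint action $\Delta_\alpha u=-\Delta\phi-\lambda q\mathbb G$ (rather than $-\Delta u$, which carries the extra $q\delta$) and to run the computation on $\{|x|>\vareps\}$, letting $\vareps\to0$: the logarithmically divergent kinetic contribution of the singular part is exactly compensated by the boundary terms at $|x|=\vareps$ dictated by \eqref{id-distr}, and the finite residue --- fixed by the coefficient $-\frac{1}{2\pi}$ in \eqref{eq.asymptotic zero} --- is precisely $\frac{|q|^2}{4\pi}$, the characteristic point-interaction term of $P$. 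Controlling this limit and matching the constant is the delicate point; once it is done, the three pieces assemble into $4P(u(t))+\mathcal R(u(t))$, which is \eqref{vir-second}.
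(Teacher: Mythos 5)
Your proof of \eqref{vir-prime} is correct and essentially the paper's: you use the distributional identity $\Delta_\alpha u=-\Delta u-q\delta$ together with $\eta(0)=0$ to kill the Dirac contribution, where the paper instead uses self-adjointness of $\Delta_\alpha$ and Lemma \ref{ble5} (namely $\eta u\in H^2$ and $\Delta_{\alpha}(\eta u)=-\Delta(\eta u)$); these are equivalent mechanisms, and your observation that the quadratic vanishing of $\eta$ restores integrability despite $\nabla u\notin L^2_{\loc}$ is the right justification for the integration by parts. The regularization scheme ($H^2_\alpha$ first, then density and continuous dependence) also matches the paper.

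The genuine gap is in \eqref{vir-second}. You correctly identify that the naive integrand $\nabla^2\eta\,\nabla\bar u\cdot\nabla u$ is not locally integrable, but your resolution --- excising $\{|x|<\vareps\}$ and claiming that the logarithmic divergence is cancelled by boundary terms at $|x|=\vareps$ ``dictated by \eqref{id-distr}'', with finite residue $\frac{|q|^2}{4\pi}$ fixed by the coefficient $-\frac{1}{2\pi}$ in \eqref{eq.asymptotic zero} --- is left entirely unexecuted: you yourself defer ``controlling this limit and matching the constant'', which \emph{is} the content of the identity. Moreover the claim is too quick as stated: in such an excision computation the $O(1)$ residues must reconstruct not only $\frac{|q|^2}{4\pi}$ but the full $4F(u)$, including the term $\Gamma^\lambda_\alpha|q|^2$ (so $\alpha$, $\lambda$, the constant term of \eqref{eq.asymptotic zero}, and the coupling $q=\phi^\lambda(0)/\Gamma^\lambda_\alpha$ all enter the bookkeeping), together with $\bar q\phi^\lambda$ cross terms; \eqref{id-distr} is a distributional statement killing the delta and does not by itself produce or organize boundary terms. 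The paper's proof is designed to avoid this issue altogether: one never forms $\nabla^2\eta\,\nabla\mathbb{G}^\lambda\cdot\nabla\mathbb{G}^\lambda$, because the self-adjoint action $\Delta_\alpha\bar u=-\Delta\bar\phi^\lambda-\bar q\lambda\mathbb{G}^\lambda$ (the delta already absorbed, cf. \eqref{def-equiv}) is paired with $\nabla u=\nabla\phi^\lambda+q\nabla\mathbb{G}^\lambda$, so every resulting integral converges absolutely --- the worst term, $-4\lambda|q|^2\rea\int\nabla\eta\cdot\mathbb{G}^\lambda\nabla\mathbb{G}^\lambda\,dx$, is finite since $\nabla\eta\sim x$ near the origin, and one integration by parts turns it into $2\lambda|q|^2\int\Delta\eta\,(\mathbb{G}^\lambda)^2dx$. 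The point-interaction term then comes from the exact \emph{global} identity $4\lambda\|\mathbb{G}^\lambda\|_{L^2}^2=\frac{1}{\pi}$, not from a local residue at the origin, while $4F(u)$ is extracted from $2\rea\int\Delta\eta\,u\,\Delta_\alpha\bar u\,dx$ by splitting $\Delta\eta=2+(\Delta\eta-2)$, the leftover pieces being collected into $\mathcal R$ as in \eqref{def-remainder}. In short: your plan outsources the theorem to a delicate $\vareps$-limit you do not carry out, whereas the identity can be (and in the paper is) proved with no divergent quantities appearing at any stage.
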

\begin{proof}
 Using a density argument we approximate $u$ by a sequence in $H^2_\alpha$. Therefore, we can assume $u\in H^2_\alpha$ and then by  using the self-adjointness of $ \Delta_{\alpha}$ and Lemma \ref{ble5}, we get 
\begin{equation}
    \begin{aligned}
      \frac{d}{dt}V(t) &= 2\mathrm{Re}\int \eta \partial_t u  \bar{u}dx 
      =2\mathrm{Re}\int \left(-i\eta  \Delta_{\alpha} u  \right)   \bar{u}dx \\
      &=  2\mathrm{Im}\int\eta \bar{u} \Delta_{\alpha} u dx
     =   -2\mathrm{Im}\int u  \Delta (\eta \bar{u})dx = 2 \mathrm{Im}\int \bar{u}  \nabla \eta \nabla udx,
    \end{aligned}
\end{equation}
which is \eqref{vir-prime}. Note that we do not use the behaviour of $\eta$ in this step.\smallskip

As for the second derivative of $V(t)$,   by using again the equation \eqref{NLS} solved by $u$, differentiating \eqref{vir-prime} in time yields to
\begin{align}\label{vir-2a0}
\frac{d^2}{dt^2}V(t)&= 2\rea \int\Delta\eta u\Delta_{\alpha}\bar{u}dx\\\label{vir-2b}
&+4\rea \int\Delta_{\alpha}\bar{u}\nabla u\cdot\nabla\eta dx\\\label{vir-2a}
&- 2 \rea \int\Delta\eta |u|^{p+1} dx\\\label{vir-2c}
&-4\rea\int |u|^{p-1}\bar u \nabla u\cdot\nabla\eta dx.
\end{align}
We expand   \eqref{vir-2c} obtaining 
\[
\eqref{vir-2c}=\frac{4}{p+1}\int(\Delta\eta-2)|u|^{p+1}dx+\frac{8}{p+1}\int|u|^{p+1}dx
\]
and we rewrite
\[
 \eqref{vir-2a}=-2\int(\Delta\eta-2)|u|^{p+1}dx-4\int|u|^{p+1}dx;
\]
hence
\[
\eqref{vir-2a}+\eqref{vir-2c}=-\frac{2(p-1)}{p+1}\int(\Delta\eta-2)|u|^{p+1}dx-\frac{4(p-1)}{p+1}\int |u|^{p+1}dx.
\]
To deal with \eqref{vir-2b} we explicitly  write the solution $u=\phi^\lambda+q\mathbb{G}^\lambda$, and the action of the operator $\Delta_{\alpha}u=-\Delta \phi^\lambda-q\lambda \mathbb{G}^\lambda$, then we get 
\begin{align}
\eqref{vir-2b}&=4\rea\int\nabla\eta\left(-\Delta \bar\phi^\lambda-\bar q\lambda \mathbb{G}^\lambda \right)\left(\nabla\phi^\lambda+q\nabla \mathbb{G}^\lambda\right)dx\\
&=4\rea\int\nabla\eta\left(-\Delta \bar\phi^\lambda\nabla\phi^\lambda-q\Delta\bar\phi^\lambda\nabla \mathbb{G}^\lambda-\bar q\lambda \mathbb{G}^\lambda\nabla\phi^\lambda-\lambda|q|^2\mathbb{G}^\lambda\nabla \mathbb{G}^\lambda \right)\\
&= A+ B+ C+ D.
\end{align}
We have, in order:
\begin{equation}
\begin{aligned}
A&=4\rea\int\nabla^2\eta\nabla\bar\phi^\lambda\nabla\phi^\lambda dx-2\int\Delta\eta|\nabla\phi^\lambda|^2dx,\\
B+ C&=4\rea\int \bar q\lambda\phi^\lambda \di(\nabla\eta \mathbb{G}^\lambda) -q\bar\phi^\lambda\Delta(\nabla\eta\nabla \mathbb{G}^\lambda)dx,\\
D&=2\lambda|q|^2\int\Delta\eta(\mathbb{G}^\lambda)^2dx.
\end{aligned}
\end{equation}
In conclusion,
\begin{align}
\eqref{vir-2b}&=4\rea\int\nabla^2\eta\nabla\bar\phi^\lambda\nabla\phi^\lambda dx-2\int\Delta\eta|\nabla\phi^\lambda|^2dx\\
&+4\rea\int \bar q\lambda\phi^\lambda \di(\nabla\eta \mathbb{G}^\lambda) -q\bar\phi^\lambda\Delta(\nabla\eta\nabla \mathbb{G}^\lambda)dx\\
&+2\lambda|q|^2\int(\Delta\eta-2)(\mathbb{G}^\lambda)^2dx\\
&+4\lambda|q|^2\int(\mathbb{G}^\lambda)^2dx.
\end{align}
Let us expand the  term in the right-hand side of \eqref{vir-2a0}.
\begin{align}
\hbox{r.h.s.}\eqref{vir-2a0}&=4\int u \Delta_{\alpha}\bar{u}dx+2\rea\int(\Delta\eta-2)u\Delta_{\alpha}\bar{u}dx\\
&=4 F(u) +2\rea\int(\Delta\eta-2)u\Delta_{\alpha}\bar{u}dx.
\end{align}
Note that 
$ \displaystyle 4\lambda|q|^2\int(\mathbb{G}^\lambda)^2dx=\frac{|q|^2}{\pi},$
and by gluing all contribution together, recalling that $\displaystyle F(u)=2E(u)+\frac{2}{p+1}\|u\|_{L^{p+1}}^{p+1}$,  we obtain
\begin{align}\label{vir-Q}
\frac{d^2}{dt^2}V(t)&=8E(u)-\frac{4(p-3)}{p+1}\|u\|_{L^{p+1}}^{p+1}+\frac{|q|^2}{\pi}\\\label{vir-p+1}
&-\frac{2(p-1)}{p+1}\int(\Delta \eta-2)|u|^{p+1}dx\\\label{vir-uHu}
&+2\rea\int(\Delta\eta-2)u\Delta_{\alpha}\bar{u}dx\\\label{vir-nega}
&+4\rea\int\nabla^2\eta\nabla\bar\phi^\lambda\nabla\phi^\lambda dx-2\int\Delta\eta|\nabla\phi^\lambda|^2dx\\\label{vir-non-lo}
&+4\rea\int \bar q\lambda\phi^\lambda \di(\nabla\eta \mathbb{G}^\lambda) -q\bar\phi^\lambda\Delta(\nabla\eta\nabla \mathbb{G}^\lambda)dx\\\label{vir-G-lam}
&+2\lambda|q|^2\int(\Delta\eta-2)(\mathbb{G}^\lambda)^2dx.
\end{align}
Observe that right-hand side $\eqref{vir-Q}=4P(u(t))$, and the proof of \eqref{vir-second} is done by defining 
\begin{equation}\label{def-remainder}
\mathcal R(u(t))=\eqref{vir-p+1}+\eqref{vir-uHu}+\eqref{vir-nega}+\eqref{vir-non-lo}+\eqref{vir-G-lam}.
\end{equation}
\end{proof}

At this point we precisely describe the function $\eta$ we are going to use to estimate $R(u(t)).$ Let $\theta: [0,\infty) \to [0,1]$ be a smooth  function satisfying
\begin{align} \label{def-theta}
\theta(\tau)= \left\{
\begin{array}{ccl}
1 &\text{if} &0\leq \tau\leq 1, \\
0 &\text{if} & \tau\geq 2.
\end{array}
\right.
\end{align}
We define the function $\Theta: [0,\infty) \to [0,\infty)$ by
\[
\Theta(|x|):= \int_0^{|x|} \int_0^s \theta(\tau) d\tau ds.
\]
For $R>0$, we define the radial function $\eta: \R^2 \to \R$ by
\begin{align} \label{phi-R-rad}
\eta = \eta_R(|x|) := R^2 \Theta(|x|/R).
\end{align}
With such a cut-off  function we have the following. 
\begin{prop}
Provided $3<p\leq 5$, for sufficiently large $R$, the remainder $\mathcal R$ is estimated by 
\begin{equation}
\mathcal R(u(t))\lesssim o_R(1) + o_R(1)\|u(t)\|_{H^1_\alpha}^2,
\end{equation}
where $o_R(1)$ is uniform in time. Consequently, 
\begin{equation}\label{blow-est}
    \frac{d^2}{dt^2}V(t)
\lesssim 4P(u(t))+o_R(1)+o_R(1)\|u(t)\|_{H^1_\alpha}^2.
\end{equation}
\end{prop}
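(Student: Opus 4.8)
The plan is to exploit the single structural feature that renders the whole of \eqref{def-remainder} harmless: since $\eta_R(x)=|x|^2/2$ for $|x|\le R$, on that ball one has $\nabla\eta=x$, $\nabla^2\eta=\mathrm{Id}$ and $\Delta\eta=2$, so that every weight occurring in $\mathcal R$ --- the factor $(\Delta\eta-2)$ in \eqref{vir-p+1}, \eqref{vir-uHu} and \eqref{vir-G-lam}, the combination $2(\eta''-\eta'/r)$ coming from the radial reduction of \eqref{vir-nega}, and all derivatives of $\eta$ entering \eqref{vir-non-lo} --- vanishes identically on $\{|x|\le R\}$. Hence $\mathcal R$ is supported in $\{|x|\ge R\}$, and it suffices to estimate each piece there and to make the constants uniform in $t$ via the conserved mass \eqref{eq:mass} together with $\|u\|_{H^1_\alpha}\simeq\|\phi^\lambda\|_{H^1}+|q|$.

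Before estimating I would fix the profile $\theta$ in \eqref{def-theta} to be nonincreasing. Writing $\rho=|x|/R$, one has $\eta''=\theta(\rho)$ and $\eta'/r=\rho^{-1}\int_0^\rho\theta(\tau)\,d\tau$, and since $\theta$ is nonincreasing, $\rho\,\theta(\rho)\le\int_0^\rho\theta(\tau)\,d\tau$, that is $\eta''\le\eta'/r$ and therefore $\Delta\eta=\eta''+\eta'/r\le2$ on $\{|x|\ge R\}$. With this choice the genuinely kinetic contributions acquire a favourable sign: the radial form of \eqref{vir-nega} is $2\int(\eta''-\eta'/r)|\nabla\phi^\lambda|^2\,dx\le0$, and after one integration by parts the leading part of \eqref{vir-uHu} becomes $2\,\rea\int(\Delta\eta-2)|\nabla\phi^\lambda|^2\,dx\le0$. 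These nonpositive terms are simply discarded in the upper bound, which is precisely why no equi-integrability of $\nabla\phi^\lambda$ on $\{|x|\ge R\}$ is needed; the remaining pieces of \eqref{vir-uHu} carry either $\nabla\Delta\eta=O(1/R)$ or the exponentially small $\mathbb G^\lambda,\nabla\mathbb G^\lambda$, hence are $o_R(1)(1+\|u\|_{H^1_\alpha}^2)$ by Cauchy--Schwarz and \eqref{eq:mass}.

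For the nonlinear term \eqref{vir-p+1} I would use the exterior radial Strauss inequality $|u(r)|^2\lesssim r^{-1}\|u\|_{L^2(|x|\ge r)}\|\nabla u\|_{L^2(|x|\ge r)}$, whose gradient only sees $\{|x|\ge R\}$, where $u=\phi^\lambda+q\mathbb G^\lambda$ is smooth and $\|\nabla u\|_{L^2(|x|\ge R)}\lesssim\|u\|_{H^1_\alpha}$. Inserting the conserved mass gives $\int_{|x|\ge R}|u|^{p+1}\lesssim R^{-(p-1)/2}\|u\|_{H^1_\alpha}^{(p-1)/2}$; the restriction $p\le5$ is exactly what makes $(p-1)/2\le2$, so Young's inequality produces $o_R(1)+o_R(1)\|u\|_{H^1_\alpha}^2$. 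The term \eqref{vir-G-lam} and the regular part of \eqref{vir-non-lo} are controlled by the decay \eqref{eq.asymptotic infinity}: since $\mathbb G^\lambda$ and its derivatives are $O(e^{-c|x|})$, one has $\int_{|x|\ge R}(\mathbb G^\lambda)^2\lesssim e^{-cR}$, whence $|q|^2e^{-cR}=o_R(1)\|u\|_{H^1_\alpha}^2$, while the mixed $\phi^\lambda$--$\mathbb G^\lambda$ contributions are bounded by $|q|\,e^{-cR}\|\phi^\lambda\|_{H^1}$.

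The \emph{main obstacle} is the cross term \eqref{vir-non-lo}: there the weights $\di(\nabla\eta\,\mathbb G^\lambda)$ and $\Delta(\nabla\eta\cdot\nabla\mathbb G^\lambda)$ are evaluated where $\nabla\eta=x$ and $\mathbb G^\lambda$ has its $\log|x|$ singularity \eqref{eq.asymptotic zero}, so the individual integrands are only locally integrable rather than exterior-supported. To remove the origin I would integrate by parts and use $(\lambda-\Delta)\mathbb G^\lambda=\delta$ from \eqref{def-G} together with the commutator $[\Delta,x\cdot\nabla]=2\Delta$; the distributional contributions produced this way all appear multiplied by $\eta$ (or $\nabla\eta$) at the origin, where $\eta(0)=0$, and therefore vanish by \eqref{id-distr}. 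What survives is supported in $\{|x|\ge R\}$ and is disposed of by the exponential decay of $\mathbb G^\lambda$ as above. Carrying out the computation first for $u\in H^2_\alpha$ --- where Lemma \ref{ble5} legitimises the integrations by parts and $\phi^\lambda\in H^2\hookrightarrow C^0$ makes the pointwise manipulations rigorous --- and then passing to the limit by density yields the bound for all $H^1_\alpha$ solutions, and hence \eqref{blow-est}.
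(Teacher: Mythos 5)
Your proposal is correct and is essentially the paper's own argument: exterior support of all the weights, the radial Strauss estimate combined with the conserved mass and Young's inequality (using $3<p\leq 5$) for \eqref{vir-p+1}, discarding the nonpositive kinetic terms, exponential decay of $\mathbb{G}^\lambda$ for \eqref{vir-G-lam} and the tail cross terms, and, for \eqref{vir-non-lo}, subtracting the $x$-part of $\nabla\eta$ --- your commutator/distributional computation is exactly the mechanism behind the identity $\rea\int \bar q\lambda\phi^\lambda \di(x\,\mathbb{G}^\lambda)-q\bar\phi^\lambda\Delta(x\cdot\nabla\mathbb{G}^\lambda)\,dx=0$ that the paper invokes, namely $2\delta+x\cdot\nabla\delta=\di(x\delta)=0$ in $\R^2$. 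The only cosmetic deviation is that you handle the two kinetic terms separately, which forces the extra (harmless) requirement that $\theta$ in \eqref{def-theta} be nonincreasing, whereas the paper sums \eqref{vir-nega} with the $|\nabla\phi^\lambda|^2$ part of \eqref{vir-uHu} to obtain $4\int(\eta''-1)|\nabla\phi^\lambda|^2\,dx\leq 0$, which needs only $\theta\leq 1$.
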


\begin{remark}
With standard notation, $o_R(1)$ stands for a function depending on $R$ and independent of time such that $\displaystyle\lim_{R\to+\infty}o_R(1)=0$.
\end{remark}
\begin{proof}
We use  the Strauss decay estimate for radial function to estimate the contribution given by \eqref{vir-p+1}. To this end, let us consider a radial bump function $\chi$ having the following properties:
\[
\chi\in C^{\infty}_c, \quad 0\leq\chi\leq1, \quad \chi=1 \quad\hbox{ for }\quad |x|\geq \frac{R}{2}, \quad \chi=0 \quad\hbox{ for }\quad |x|\leq\frac{R}{4}.
\]
By means of the conservation of the mass, the properties of $\chi$, the fact that $\chi u\in H^1$, the decomposition  of the function $u=\phi^\lambda+q\mathbb{G}^\lambda$, and the decay properties of $\mathbb{G}^\lambda$ away from the origin, we estimate
\begin{align}
\eqref{vir-p+1}&\lesssim\int_{|x|\geq R }|u|^{p+1}dx\lesssim\int_{|x|\geq \frac{R}{4}}\left(\frac{|x|^{1/2}|\chi u|}{|x|^{1/2}}\right)^{p-1}|u|^2dx\\
&\lesssim R^{-\frac{p-1}{2}}\left(\sup_{|x|\geq \frac{R}{4}}|x|^{1/2}|\chi u(x)|\right)^{p-1}\lesssim R^{-\frac{p-1}{2}}\|\chi u\|_{L^2}^{\frac{p-1}{2}}\|\nabla (\chi u)\|_{L^2}^{\frac{p-1}{2}}\\
&\lesssim R^{-\frac{p-1}{2}}\|\nabla (\chi u)\|_{L^2}^{\frac{p-1}{2}}\\
&\lesssim R^{-\frac{p-1}{2}}\left( \|\nabla \chi\|_{L^\infty}\|u\|_{L^2}+\|\chi\|_{L^\infty}\|\nabla\phi^\lambda+q\nabla \mathbb{G}^\lambda\|_{L^2(|x|\geq\frac{R}{4})}\right)^{\frac{p-1}{2}}\\
&\lesssim R^{-\frac{p-1}{2}}\left( M^{1/2}+\|\nabla\phi^\lambda\|_{L^2}+|q|\|\nabla \mathbb{G}^\lambda\|_{L^2(|x|\geq\frac{R}{4})}\right)^{\frac{p-1}{2}}\\
&\lesssim o_R(1)+o_{R}(1)(\|\nabla\phi^\lambda\|_{L^2}+|q|)^{\frac{p-1}{2}}\\
&\lesssim o_R(1)+o_{R}(1)\|u\|_{H^1_\alpha}^{\frac{p-1}{2}}.
\end{align}

\noindent Let us deal with \eqref{vir-uHu}.
\begin{align}
\eqref{vir-uHu}&=-2\rea\int(\Delta\eta-2)(\phi^\lambda+q\mathbb{G}^\lambda)(\Delta \phi^\lambda+\bar q\lambda \mathbb{G}^\lambda)dx\\
&=-\int(\Delta\eta-2)\left(\Delta|\phi^\lambda|^2-2|\nabla\phi^\lambda|^2+\bar q \lambda\phi^\lambda \mathbb{G}^\lambda+q\mathbb{G}^\lambda\Delta\bar\phi^\lambda+\lambda|q|^2(\mathbb{G}^\lambda)^2\right)dx\\
&=-\int\Delta^2\eta|\phi^\lambda|^2dx+2\rea\int(\Delta\eta-2)|\nabla\phi^\lambda|^2dx\\
&-2\rea\int(\Delta\eta-2)\left( \bar q \lambda\phi^\lambda \mathbb{G}^\lambda+q\mathbb{G}^\lambda\Delta\bar\phi^\lambda+\lambda|q|^2(\mathbb{G}^\lambda)^2 \right)dx.
\end{align}
Gluing all together we end-up with 
\begin{align}\label{vir-neg1}
\frac{d^2}{dt^2}V(t)&\lesssim 4P(u(t))+o_R(1)\int|\phi^\lambda|^2dx+R^{-\frac{p-1}{2}}\|u(t)\|_{H^1_\alpha}^{\frac{p-1}{2}}\\\label{vir-neg2}
&+4\int(\eta''-1)|\nabla\phi^\lambda|^2dx\\\label{vir-neg3}
&-2\rea\int(\Delta\eta-2)\left( \bar q \lambda\phi^\lambda \mathbb{G}^\lambda+q\mathbb{G}^\lambda\Delta\bar\phi^\lambda\right)dx\\\label{vir-nonlocal}
&+4\rea\int \bar q\lambda\phi^\lambda \di(\nabla\eta \mathbb{G}^\lambda) -q\bar\phi^\lambda\Delta(\nabla\eta\nabla \mathbb{G}^\lambda)dx.
\end{align}
The term \eqref{vir-neg2} is negative as $\eta''\leq 1$, so it is simply estimated by zero. 
As for the term \eqref{vir-neg3} we note that the support of $\Delta\eta-2$ is contained outside the ball centered at the origin and radius $R$, so by using the $L^2$ integrability of $\mathbb{G}^\lambda$ outside a ball centered at the origin, we control by the Cauchy-Schwarz inequality and by the dominated convergence theorem
\[
\rea\int(\Delta\eta-2)\bar q \lambda\phi^\lambda \mathbb{G}^\lambda dx\lesssim |q|\|\phi^\lambda\|_{L^2}\|\mathbb{G}^\lambda\|_{L^2(|x|\geq R)}\lesssim o_R(1)(|q|^2+\|\phi^\lambda\|_{L^2}^2).
\]
Similarly, after integration  by part, we estimate
\begin{align}
\rea\int(\Delta\eta-2)q\mathbb{G}^\lambda\Delta\bar\phi^\lambda dx&=-q\rea\left(\int\nabla(\Delta\eta-2)\mathbb{G}^\lambda\nabla\bar\phi^\lambda dx+(\Delta\eta-2)\nabla \mathbb{G}^\lambda\nabla\bar\phi^\lambda dx\right)\\
&\lesssim o_R(1)(|q|^2+\|\nabla\phi^\lambda\|_{L^2}^2).
\end{align}
Therefore, 
\begin{equation}
 \eqref{vir-neg3}\lesssim o_R(1)(|q|^2+\|\phi^\lambda\|_{L^2}^2+\|\nabla\phi^\lambda\|_{L^2}^2).
\end{equation}
As for \eqref{vir-nonlocal}, we manipulate the integral as 
\begin{align}
\eqref{vir-nonlocal}&=4\rea\int \bar q\lambda\phi^\lambda \di((\nabla\eta-x) \mathbb{G}^\lambda) -q\bar\phi^\lambda\Delta((\nabla\eta-x)\nabla \mathbb{G}^\lambda)dx\\
&+4\rea\int \bar q\lambda\phi^\lambda \di(x \mathbb{G}^\lambda) -q\bar\phi^\lambda\Delta(x\nabla \mathbb{G}^\lambda)dx\\
&=4\rea\int \bar q\lambda\phi^\lambda \di((\nabla\eta-x) \mathbb{G}^\lambda) -q\bar\phi^\lambda\Delta((\nabla\eta-x)\nabla \mathbb{G}^\lambda)dx\\\label{vir-neg4}
&=-4\rea\int \bar q\lambda\nabla\phi^\lambda ((\nabla\eta-x) \mathbb{G}^\lambda) -q\nabla\bar\phi^\lambda\nabla((\nabla\eta-x)\nabla \mathbb{G}^\lambda)dx,
\end{align}
where we used the identity 
\[
 \rea\int \bar q\lambda\phi^\lambda \di(x \mathbb{G}^\lambda) -q\bar\phi^\lambda\Delta(x\nabla \mathbb{G}^\lambda)dx=0.
\]
By observing that the function $\nabla\eta-x$ is supported outside a ball of radius $R$ centered at the origin, similarly to the estimate for \eqref{vir-neg3} we get 
\begin{equation}
\eqref{vir-neg4}\lesssim  o_R(1)(|q|^2+\|\nabla\phi^\lambda\|_{L^2}^2).
\end{equation}
In the end, provided $p<5$, by using the Young inequality, we get
\begin{align}
\frac{d^2}{dt^2}V(t)&\lesssim 4P(u(t))+o_R(1)(|q|^2+\|\phi^\lambda\|_{L^2}^2+\|\nabla\phi^\lambda\|_{L^2}^2)+R^{-\frac{p-1}{2}}\|u(t)\|_{H^1_\alpha}^{\frac{p-1}{2}}\\
&\lesssim 4P(u(t))+o_R(1)+o_R(1)\|u(t)\|_{H^1_\alpha}^2.
\end{align}
If instead $p=5$, we directly have the estimate without employing Young inequality. 
\end{proof}

We prove now a refinement of \eqref{P-bound} which will be used in the proof of existence of finite-time blowing-up solutions. 
\begin{lemma}
Under the hypothesis of Proposition \ref{prop:P-neg}, provided $3<p\leq5$, we have the following refinement of the control on $P(u(t))$: the exist two  constants $c_1,c_2>0$  independent of $t$ such that 
\begin{equation}\label{Q-control-ref}
P(u(t))+ c_1\|u(t)\|^2_{H^1_\alpha}\leq -c_2.
\end{equation}
\end{lemma}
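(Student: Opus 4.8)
The plan is to read \eqref{Q-control-ref} as a coercivity statement: rather than merely bounding $P(u(t))$ above by a negative constant as in \eqref{P-bound}, I want $-P(u(t))$ to control the full $H^1_\alpha$-norm with a strictly positive gap. I would produce two independent upper bounds for $P(u(t))$ and then combine them by a convex combination. The first bound is the uniform negativity $P(u(t)) < 2(S_\omega(u_0) - S_\omega(v_\omega)) =: -2\delta < 0$ furnished by Proposition \ref{prop:P-neg}, where $\delta > 0$ is time-independent thanks to the conservation of $S_\omega$. The second is a purely algebraic coercivity bound, derived below, of the form $P(u(t)) \le C_0 - c_1'\|u(t)\|_{H^1_\alpha}^2$ with $c_1' > 0$ but, crucially, with $C_0 > 0$.

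To obtain the algebraic bound I first eliminate the $L^{p+1}$-term. Using $E(u) = \tfrac12 F(u) - \tfrac{1}{p+1}\|u\|_{L^{p+1}}^{p+1}$ to substitute for $\|u\|_{L^{p+1}}^{p+1}$ in the definition of $P$, a short computation gives
\begin{equation*}
-P(u) = \frac{p-3}{2}F(u) - (p-1)E(u) - \frac{|q|^2}{4\pi}.
\end{equation*}
Here $p>3$ makes the coefficient of $F(u)$ positive, and by conservation of mass and energy both $E(u(t)) = E(u_0) \ge 0$ and $\|u(t)\|_{L^2}^2 = M(u_0)$ are fixed constants. I then rewrite $F$ through its quadratic-form representation \eqref{quad-form} for a value of $\lambda$ still to be chosen, namely $F(u) = \|\nabla\phi^\lambda\|_{L^2}^2 + \lambda\|\phi^\lambda\|_{L^2}^2 + \Gamma^\lambda_\alpha|q|^2 - \lambda M(u_0)$, whose first three terms are comparable to $\|u\|_{H^1_\alpha}^2$. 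Upon substituting, the charge contribution becomes $\bigl(\tfrac{p-3}{2}\Gamma^\lambda_\alpha - \tfrac{1}{4\pi}\bigr)|q|^2$; since $\Gamma^\lambda_\alpha = \alpha + \tfrac{\gamma}{2\pi} + \tfrac{\log(\sqrt\lambda/2)}{2\pi} \to +\infty$ as $\lambda \to +\infty$, while the domain, the action, and the intrinsic quantities $q$, $E$, $P$, $M$, $F$ are all independent of $\lambda$, I fix $\lambda$ large enough that this coefficient is positive. The three norm terms then jointly dominate $c_1'\|u\|_{H^1_\alpha}^2$, and collecting the remaining (conserved, $\lambda$-dependent but time-independent) constants into $C_0 := \tfrac{p-3}{2}\lambda M(u_0) + (p-1)E(u_0) \ge 0$ yields exactly $-P(u(t)) \ge c_1'\|u(t)\|_{H^1_\alpha}^2 - C_0$.

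Finally, I combine the two bounds: taking their convex combination with weights $\theta\in(0,1)$ and $1-\theta$ gives
\begin{equation*}
P(u(t)) + \theta c_1'\|u(t)\|_{H^1_\alpha}^2 \le \theta C_0 - 2(1-\theta)\delta,
\end{equation*}
and choosing $\theta < \tfrac{2\delta}{C_0 + 2\delta}$ makes the right-hand side a strictly negative constant; setting $c_1 := \theta c_1'$ and $c_2 := 2(1-\theta)\delta - \theta C_0 > 0$ proves \eqref{Q-control-ref}. The main obstacle is conceptual rather than computational: the algebraic coercivity alone carries the wrong-sign constant $C_0 > 0$ (issuing from the conserved mass and the nonnegative energy), so it cannot by itself deliver a negative right-hand side, and it is precisely the variational gap $\delta>0$ from the sub-ground-state hypothesis $S_\omega(u_0) < S_\omega(v_\omega)$, transferred through the convex combination, that tips the estimate to a genuine coercive lower bound for $-P$. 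A secondary technical point, with no counterpart in the free-Laplacian theory, is the need to exploit the $\lambda$-independence of the intrinsic functionals together with the logarithmic growth of $\Gamma^\lambda_\alpha$ in order to absorb the negative charge term $-\tfrac{|q|^2}{4\pi}$.
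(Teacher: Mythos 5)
Your proof is correct and is essentially the paper's own argument in a slightly reorganized form: both rest on the identity $-P(u)=\tfrac{p-3}{2}F(u)-(p-1)E(u)-\tfrac{|q|^2}{4\pi}$ for $p>3$, both absorb the bad charge term $-\tfrac{|q|^2}{4\pi}$ by taking $\lambda$ large so that $\Gamma^\lambda_\alpha$ dominates (using the $\lambda$-independence of $q$, $F$, $P$), and both inject the variational gap from Proposition \ref{prop:P-neg} via conservation laws. Your convex combination with weight $\theta$ is exactly the paper's choice of a small parameter $\varepsilon$ (with $\theta=\tfrac{2\varepsilon}{p-3}$), so the two proofs coincide up to presentation.
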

\begin{proof}
 Consider $\varepsilon>0$  to be chosen later. First of all, we note the following identity:
\[
F(u)=\frac{2}{p-3}\left((p-1)E(u)-P(u)+\frac{|q|^2}{4\pi} \right).
\]
Then we have
\begin{equation}
\begin{aligned}
&\phantom{\,\quad}P(u)+\varepsilon\left( \|\nabla \phi^\lambda\|_{L^2}^2+\lambda\|\phi^\lambda\|_{L^2}^2+ |q|^2\right)\\
&=P(u)+\varepsilon\left(\|\nabla \phi^\lambda\|_{L^2}^2+\lambda\|\phi^\lambda\|_{L^2}^2\pm\lambda\|u\|_{L^2}^2\pm \Gamma^\lambda_\alpha|q|^2+|q|^2\right)\\
&=P(u)+\varepsilon\left(F(u)+\lambda M(u)-\Gamma^\lambda_\alpha|q|^2+|q|^2\right)\\
&\leq\left(1-\frac{2\varepsilon}{p-3}\right)P(u)+\frac{2\varepsilon(p-1)}{p-3}E(u)+\varepsilon\lambda M(u)+\varepsilon\left(\frac{1}{2\pi(p-3)}+1-\Gamma^\lambda_\alpha
\right)|q|^2,
\end{aligned}
\end{equation}
and by conservation of energy and mass we can estimate, provided $\varepsilon$ is small enough,
\[
P(u)+\varepsilon\left( \|\nabla \phi^\lambda\|_{L^2}^2+\lambda\|\phi^\lambda\|_{L^2}^2+ |q|^2\right)\leq -\frac{\delta}{2}+\varepsilon\left(\frac{1}{2\pi(p-3)}+1-\Gamma^\lambda_\alpha
 \right)|q|^2,
\]
where $\delta=2(S_{\omega}(v_\omega)-S(u_0))$.  Recall that  $\Gamma^\lambda_\alpha=\alpha+\frac{\gamma-\log2}{2\pi}+\frac{\log\lambda}{4\pi}$. Thus, for sufficiently large $\lambda$, possibly up to considering a smaller $\varepsilon$, we can infer that 
\[
P(u)+\varepsilon\left( \|\nabla \phi^\lambda\|_{L^2}^2+\lambda\|\phi^\lambda\|_{L^2}^2+ |q|^2\right)\leq -\frac{\delta}{2}
+\frac{\varepsilon}{2}|q|^2,
\]
and then
\[
P(u)+\frac\varepsilon2\left( \|\nabla \phi^\lambda\|_{L^2}^2+\lambda\|\phi^\lambda\|_{L^2}^2+ |q|^2\right)\leq -\frac{\delta}{2}.
\]
The claim follows with $c_1=\frac\varepsilon2$ and $c_2=\frac\delta2$.
\end{proof}

\begin{lemma}
Under the hypothesis of Proposition \ref{prop:P-neg}, provided $3<p\leq5$, we have 
\begin{equation}\label{blow-est-2}
\inf_{ t\in (-T_{\min},T_{\max}) }\|u(t)\|_{H^1_\alpha}^2\geq c>0.      
\end{equation}
\end{lemma}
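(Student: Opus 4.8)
The plan is to deduce \eqref{blow-est-2} directly from the uniform negativity of the Pohozaev functional along the flow, combined with a continuity-type bound showing that $P$ is dominated by powers of the $H^1_\alpha$-norm. By Proposition \ref{prop:P-neg}, estimate \eqref{P-bound} holds at every $t\in(-T_{\min},T_{\max})$; setting
\[
\mu:=2\big(S_\omega(v_\omega)-S_\omega(u_0)\big)>0,
\]
which is a fixed quantity independent of $t$, we have $P(u(t))<-\mu<0$ for all such $t$, and in particular $|P(u(t))|=-P(u(t))>\mu$ uniformly in time. This is the only place where the hypotheses of Proposition \ref{prop:P-neg} enter.

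The key step is the upper bound
\[
|P(v)|\lesssim \|v\|_{H^1_\alpha}^2+\|v\|_{H^1_\alpha}^{p+1},\qquad v=\phi^\lambda+q\mathbb{G}^\lambda\in H^1_\alpha,
\]
which I would obtain by controlling the three contributions to $P(v)=F(v)-\frac{p-1}{p+1}\|v\|_{L^{p+1}}^{p+1}+\frac{|q|^2}{4\pi}$ separately. For the quadratic form, each summand of \eqref{quad-form} is $\lesssim\|v\|_{H^1_\alpha}^2$: indeed $\|\nabla\phi^\lambda\|_{L^2}^2,\|\phi^\lambda\|_{L^2}^2\leq\|\phi^\lambda\|_{H^1}^2\lesssim\|v\|_{H^1_\alpha}^2$ and $|q|^2\lesssim\|v\|_{H^1_\alpha}^2$ by the equivalence $\|v\|_{H^1_\alpha}\simeq\|\phi^\lambda\|_{H^1}+|q|$, while $\|v\|_{L^2}^2\lesssim\|v\|_{H^1_\alpha}^2$ by the embedding \eqref{Sob-emb}; hence $|F(v)|\lesssim\|v\|_{H^1_\alpha}^2$. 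The term $\frac{|q|^2}{4\pi}$ is likewise $\lesssim\|v\|_{H^1_\alpha}^2$, and the Sobolev embedding \eqref{Sob-emb} yields $\|v\|_{L^{p+1}}^{p+1}\lesssim\|v\|_{H^1_\alpha}^{p+1}$. Summing gives the displayed estimate.

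Combining the two facts, for every $t$ one has
\[
\mu<|P(u(t))|\leq C\big(\|u(t)\|_{H^1_\alpha}^2+\|u(t)\|_{H^1_\alpha}^{p+1}\big).
\]
The final step is an elementary dichotomy on $X:=\|u(t)\|_{H^1_\alpha}^2$. If $X\leq 1$, then since $\tfrac{p+1}{2}>1$ we have $X^{(p+1)/2}\leq X$, so $\mu< 2CX$ and $X\geq \mu/(2C)$; if instead $X>1$ the lower bound is trivial. In either case $X\geq \min\{1,\mu/(2C)\}=:c>0$, with $c$ independent of $t$, and taking the infimum over $t\in(-T_{\min},T_{\max})$ produces \eqref{blow-est-2}.

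I do not expect a genuine obstacle: the whole content is the two-sided control of $P$. The only points requiring care are that the negative bound \eqref{P-bound} is uniform in time (precisely what Proposition \ref{prop:P-neg} guarantees), and that the two different homogeneities in the upper estimate are reconciled through the small-norm/large-norm split rather than a single power, which is why the conclusion is a positive constant rather than a bound degenerating as $p$ varies.
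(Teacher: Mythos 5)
Your proposal is correct and follows essentially the same route as the paper: both rest on the uniform bound $P(u(t))<2\bigl(S_\omega(u_0)-S_\omega(v_\omega)\bigr)<0$ from Proposition \ref{prop:P-neg} together with the estimate $|P(v)|\lesssim \|v\|_{H^1_\alpha}^2+\|v\|_{H^1_\alpha}^{p+1}$, obtained from $|F(v)|\lesssim\|v\|_{H^1_\alpha}^2$ and the Sobolev embedding \eqref{Sob-emb}. The only (cosmetic) difference is that the paper concludes by contradiction along a sequence $\|u(t_n)\|_{H^1_\alpha}\to 0$, whereas you make the same mechanism quantitative via the small-norm/large-norm dichotomy, yielding an explicit constant $c=\min\{1,\mu/(2C)\}$.
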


\begin{proof}
Recall the definition 
$F(u)=\|\nabla\phi^\lambda\|_{L^2}^2+\lambda(\|\phi^\lambda\|_{L^2}^2-\|u\|_{L^2}^2)+\Gamma^\lambda_\alpha|q|^2.$
Note that 
\[
\left|\|u\|^2_{L^2}-\|\phi^\lambda\|^2_{L^2}\right|=\left||q|^2\|\mathbb G\|^2_{L^2}+2\rea\int\phi^\lambda \bar q\mathbb G dx\right|
\lesssim (\|\phi^\lambda\|^2_{L^2}+|q|^2).
\]
Then $|F(u)|\lesssim (\|\nabla\phi^\lambda\|^2_{L^2}+\|\phi^\lambda\|^2_{L^2}+|q|^2)\sim \|u\|^2_{H^1_{\alpha}}$,
while the Sobolev embedding \eqref{Sob-emb} gives $\|u\|_{L^{p+1}}\lesssim \|u\|_{H^1_{\alpha}}$. Therefore, if we suppose that there exists a sequence of times $\{t_n\}_n\subset(-T_{\min}, T_{\max})$ such that $\|u(t_n)\|_{H^1_{\alpha}}\to 0$ as $n\to\infty$, then also  $|P(u(t_n))
|\lesssim \|u(t_n)\|_{H^1_{\alpha}}^2+\|u(t_n)\|_{H^1_{\alpha}}^{p+1}\to 0$ which is a contradiction with respect to the conclusion of Proposition \ref{prop:P-neg}.
\end{proof}
\begin{proof}[Proof of Theorem \ref{thm-blowup}]
We can now conclude with the proof of Theorem \ref{thm-blowup}. It follows by a convexity argument after \eqref{blow-est}, \eqref{Q-control-ref}, and \eqref{blow-est-2}
\end{proof}

\begin{remark}
We also remark that, as for the classical NLS with free Laplacian, we have global existence of solutions:\vspace{2mm}
    
\noindent   \textup{(i)} provided $p<3$ (mass-subcritical case) and no matter the size of the initial data. Indeed, by the interpolation inequality $\|u\|_{L^{p+1}}\leq C_{\GN} \|u\|_{L^2}^{\frac{2}{p+1}}\|u\|_{H^1_{\alpha}}^{1-\frac{2}{p+1}}$, where $C_{\GN}$ is the Gagliardo-Nirenberg optimal constant, we have 
\begin{equation*}
\begin{aligned}
\|u\|_{H^1_{\alpha}}^2&=\|\nabla \phi^\lambda\|_{L^2}^2+\lambda\| \phi^\lambda\|_{L^2}^2+\Gamma^\lambda_\alpha|q|^2=F(u)+\lambda\|u\|_{L^2}^2\\
&=2E(u)+\lambda M(u)+\frac{2}{p+1}\|u\|_{L^{p+1}}^{p+1} \lesssim 1+\|u\|_{H^1_{\alpha}}^{p-1},
\end{aligned}
\end{equation*}
hence we have a uniform bound on the $H^1_{\alpha}$-norm of the solution;\medskip
    
\noindent    \textup{(ii)} $p=3$ (mass-critical case) provided $\|u_0\|_{L^2}$ is sufficiently small. Indeed, as above we estimate
\begin{equation*}
\begin{aligned}
\|u\|_{H^1_{\alpha}}^2 &= 2E(u) +\lambda M(u)   +\frac{2}{p+1}\|u\|_{L^{p+1}}^{p+1} \\
&\leq 2E(u) +\lambda M(u)  +  \frac{1}{2} C_{\GN}^{4} M(u)\|u\|^2_{H^1_{\alpha}},
\end{aligned}
\end{equation*}
so 
\[
M(u)<2C^{-4}_{\GN}
\] 
guarantees a uniform bound on the $H^1_\alpha$ and the proof is complete.
\end{remark}

\begin{ackno}\rm
V.G. is  partially supported  by the project PRIN  2020XB3EFL by the Italian Ministry of Universities and Research, by   the Top Global University Project, Waseda University, by the University of Pisa, Project PRA 2022 85, and by Institute of Mathematics and Informatics, Bulgarian Academy of Sciences.
\end{ackno}

\bibliographystyle{plain}
\bibliography{Sing}

\begin{thebibliography}{10}

\bibitem{ABCT}
R.~Adami, F.~Boni, R.~Carlone, and L.~Tentarelli.
\newblock Ground states for the planar {NLSE} with a point defect as minimizers
  of the constrained energy.
\newblock {\em Calc. Var. Partial Differential Equations}, 61(5):Paper No. 195,
  32, 2022.

\bibitem{AGKH87}
S.~Albeverio, F.~Gesztesy, R.~H{\o}egh-Krohn, and H.~Holden.
\newblock Point interactions in two dimensions: basic properties,
  approximations and applications to solid state physics.
\newblock {\em J. Reine Angew. Math.}, 380:87--107, 1987.

\bibitem{AGKH05}
S.~Albeverio, F.~Gesztesy, R.~H{\o}egh-Krohn, and H.~Holden.
\newblock {\em Solvable models in quantum mechanics}.
\newblock AMS Chelsea Publishing, Providence, RI, second edition, 2005.
\newblock With an appendix by Pavel Exner.

\bibitem{AH81}
S.~Albeverio and R.~H\o{}egh-Krohn.
\newblock Point interactions as limits of short range interactions.
\newblock {\em J. Operator Theory}, 6(2):313--339, 1981.

\bibitem{BL76}
J.~Bergh and J.~L\"{o}fstr\"{o}m.
\newblock {\em Interpolation spaces. {A}n introduction}.
\newblock Grundlehren der Mathematischen Wissenschaften, No. 223.
  Springer-Verlag, Berlin-New York, 1976.

\bibitem{BZ11}
J.~Bork, Y.-H. Zhang, L.~Diekh\"oner, L.~Borda, P.~Simon, J.~Kroha, P.~Wahl,
  and K.~Kern.
\newblock A tunable two-impurity {K}ondo system in an atomic point contact.
\newblock {\em Nature Physics}, 7(11):901--906, 2011.

\bibitem{CFN2021}
C.~Cacciapuoti, D.~Finco, and D.~Noja.
\newblock Well posedness of the nonlinear {S}chr\"odinger equation with
  isolated singularities.
\newblock {\em J. Differential Equations}, 305:288--318, 2021.

\bibitem{Cazenave}
T.~Cazenave.
\newblock {\em Semilinear {S}chr\"{o}dinger equations}, volume~10 of {\em
  Courant Lecture Notes in Mathematics}.
\newblock New York University, Courant Institute of Mathematical Sciences, New
  York; American Mathematical Society, Providence, RI, 2003.

\bibitem{CMY19}
H.~D. Cornean, A.~Michelangeli, and K.~Yajima.
\newblock Two-dimensional {S}chr\"{o}dinger operators with point interactions:
  threshold expansions, zero modes and {$L^p$}-boundedness of wave operators.
\newblock {\em Rev. Math. Phys.}, 31(4):1950012, 32, 2019.

\bibitem{CMY19b}
H.~D. Cornean, A.~Michelangeli, and K.~Yajima.
\newblock Erratum: {T}wo-dimensional {S}chr\"{o}dinger operators with point
  interactions: threshold expansions, zero modes and {$L^p$}-boundedness of
  wave operators.
\newblock {\em Rev. Math. Phys.}, 32(4):2092001, 5, 2020.

\bibitem{FN2023}
D.~Finco and D.~Noja.
\newblock Blow-up and instability of standing waves for the {NLS} with a point
  interaction in dimension two.
\newblock {\em Z. Angew. Math. Phys.}, 74(4):Paper No. 162, 17, 2023.

\bibitem{FGI22}
N.~Fukaya, V.~Georgiev, and M.~Ikeda.
\newblock On stability and instability of standing waves for 2d-nonlinear
  schr\"odinger equations with point interaction.
\newblock {\em Journal of Differential Equations}, 321:258--295, 2022.

\bibitem{GR24}
V.~Georgiev and M.~Rastrelli.
\newblock Sobolev spaces for singular perturbation of {L}aplace operator.
\newblock {\em arXiv preprint}, 2023.

\bibitem{Glassey}
R.~T. Glassey.
\newblock On the blowing up of solutions to the {C}auchy problem for nonlinear
  {S}chr\"odinger equations.
\newblock {\em J. Math. Phys.}, 18(9):1794--1797, 1977.

\bibitem{GO2014}
L.~Grafakos and S.~Oh.
\newblock The {K}ato-{P}once inequality.
\newblock {\em Comm. Partial Differential Equations}, 39(6):1128--1157, 2014.

\bibitem{HR2007}
J.~Holmer and S.~Roudenko.
\newblock On blow-up solutions to the 3{D} cubic nonlinear {S}chr\"odinger
  equation.
\newblock {\em Appl. Math. Res. Express. AMRX}, pages Art. ID abm004, 31, 2007.
\newblock [Issue information previously given as no. 1 (2007)].

\bibitem{Kato95}
T.~Kato.
\newblock On nonlinear {S}chr\"odinger equations. {II}. {$H^s$}-solutions and
  unconditional well-posedness.
\newblock {\em J. Anal. Math.}, 67:281--306, 1995.

\bibitem{MOW2013}
S.~Machihara, T.~Ozawa, and H.~Wadade.
\newblock Generalizations of the logarithmic {H}ardy inequality in critical
  {S}obolev-{L}orentz spaces.
\newblock {\em J. Inequal. Appl.}, pages 2013:381, 14, 2013.

\bibitem{MO17}
A.~Michelangeli and A.~Ottolini.
\newblock On point interactions realised as {T}er-{M}artirosyan-{S}kornyakov
  {H}amiltonians.
\newblock {\em Rep. Math. Phys.}, 79(2):215--260, 2017.

\bibitem{MuSc}
C.~Muscalu and W.~Schlag.
\newblock {\em Classical and multilinear harmonic analysis. {V}ol. {II}},
  volume 138 of {\em Cambridge Studies in Advanced Mathematics}.
\newblock Cambridge University Press, Cambridge, 2013.

\bibitem{OT91}
T.~Ogawa and Y.~Tsutsumi.
\newblock Blow-up of {$H^1$} solution for the nonlinear {S}chr\"{o}dinger
  equation.
\newblock {\em J. Differential Equations}, 92(2):317--330, 1991.

\bibitem{OSY12}
N.~Okazawa, T.~Suzuki, and T.~Yokota.
\newblock Energy methods for abstract nonlinear {S}chr\"{o}dinger equations.
\newblock {\em Evol. Equ. Control Theory}, 1(2):337--354, 2012.

\bibitem{Staffilani95}
G.~Staffilani.
\newblock {\em The initial value problem for some dispersive differential
  equations}.
\newblock ProQuest LLC, Ann Arbor, MI, 1995.
\newblock Thesis (Ph.D.), The University of Chicago.

\bibitem{W95}
G.~N. Watson.
\newblock {\em A treatise on the theory of {B}essel functions}.
\newblock Cambridge Mathematical Library. Cambridge University Press,
  Cambridge, 1995.
\newblock Reprint of the second (1944) edition.

\bibitem{Ya2021}
K.~Yajima.
\newblock {$L^p$}-boundedness of wave operators for 2{D} {S}chr\"odinger
  operators with point interactions.
\newblock {\em Ann. Henri Poincar\'e}, 22(6):2065--2101, 2021.

\end{thebibliography}
	
\end{document}